\def\cF{\mathcal{F}}
\def\Om{\Omega}
\def\g{\gamma}
\def\l{\lambda}
\def\p{\partial}
\def\a{\alpha}
\def\b{\beta}
\def\d{\delta}
\def\z{\zeta}
\def\vp{\varphi}
\def\iu{\mathrm{i}}
\def\cL{\mathcal{L}}
\def\cA{\mathcal{A}}
\def\cH{\mathcal{H}}
\def\cI{\mathcal{I}}
\def\cQ{\mathcal{Q}}
\def\cL{\mathcal{L}}
\def\ru{\mathrm{u}}
\def\fa{\mathfrak{a}}
\def\fl{\mathfrak{l}}
\def\fr{\mathfrak{r}}
\DeclareMathOperator{\spec}{\sigma}
\DeclareMathOperator{\discspec}{\sigma_{disc}}
\DeclareMathOperator{\essspec}{\sigma_{ess}}
\DeclareMathOperator{\RE}{Re}
\numberwithin{equation}{section}
\newtheorem{theorem}{Theorem}[section]
\newtheorem{lemma}{Lemma}[section]
\begin{document}

\allowdisplaybreaks


\title{On spectra of convolution operators with potentials}

\date{\empty}

\author{D.I. Borisov$^{1,2,3}$, A.L. Piatnitski$^{4,5}$, E.A. Zhizhina$^4$}

\maketitle
{\small
 \begin{quote}
1) Institute of Mathematics, Ufa Federal Research Center, Russian Academy of Sciences, Chernyshevsky str. 112, Ufa, Russia, 450008
\\
2) Bashkir State University, Zaki Validi str. 32, Ufa, Russia, 450076
\\
3) University of Hradec Kr\'alov\'e
62, Rokitansk\'eho, Hradec Kr\'alov\'e 50003, Czech Republic
\\
4) Institute for Information Transmission Problem (Kharkevich Institute) of RAS, Bolshoy Karetny per. 19, build.1, Moscow, Russia, 127051
\\
5) The Arctic University of Norway, campus Narvik, PO Box 385, N-8505 Narvik, Norway
\\
Emails: borisovdi@yandex.ru, apiatnitski@gmail.com, ejj@iitp.ru
\end{quote}}

{\small \begin{quote}
 \noindent{\bf Abstract.}  This paper focuses on the spectral properties of a bounded self-adjoint operator in $L_2(\mathds R^d)$ being
 the sum of a convolution operator with an integrable convolution kernel and an operator of  multiplication by a continuous potential converging to zero at infinity.
 We study both the essential and the discrete spectra of this operator. It is shown that
 the essential spectrum of the sum is the union of the essential spectrum of the convolution operator and the image of the potential.
 We then provide a number of sufficient conditions for the existence of  discrete spectrum and obtain lower and upper bounds for the
 number of discrete eigenvalues.  Special attention is paid to the case of operators possessing countably many points of the discrete spectrum.
We also compare the spectral properties of the operators considered in this work with those of classical Schr\"odinger operators.

 \medskip

 \noindent{\bf Keywords:} convolution operator, potential,  essential spectrum, discrete spectrum, minimax principle.

 \medskip

 \noindent{\bf Mathematics Subject Classification:}
 \end{quote}}

\section{Introduction}

In this work we study the spectral properties of a non-local self-adjoint operator $\cal L$ in $L_2(\mathds{R}^d)$ of the form
\begin{equation}\label{intro-1}
\cL u = \cA u+ V u, \qquad  (\cA u)(x): = \int\limits_{\mathds{R}^d} a(x-y) u(y) \, dy,
\end{equation}
where $\cA$ is a convolution operator with an integrable kernel $a(z)$, and  $V$ is a potential being a bounded continuous real-valued function that tends to zero at infinity.
Our goal is to characterize the structure of the essential and discrete spectrum of this operator.

 In recent years there is a  growing attention
to non-local convolution type operators with integrable
kernels. This is stimulated by a number of interesting and non-trivial mathematical problems appearing in
the theory of such operators on the one hand, and by various important applications of this theory on the other hand.
Among the applied fields    in which  zero order convolution type operators are of essential importance we mention population
dynamics,  ecological problems and  porous media theory.
In particular, in the population dynamics models the operators defined in (\ref{intro-1}) with a non-negative function $a(\cdot)$ can be  used to analyse the spread of infections or the growth of biological populations of plants or animals.

A rigorous mathematical theory of population dynamics relies on the so called contact model in continuum,  see e.g.  \cite{KKP, PiroZh, MW}.
This model deals with  a birth and death process that describes the evolution of stochastic interacting infinite-particle systems in terms of birth and death rates. The function $a(\cdot)$ is called the dispersal kernel, it defines the distribution of a position of  a newly born particle
in the  configuration.

The evolution of the first correlation function  denoted by $u(x,t)$ and being the density of a population is described  by the following  Cauchy problem:
\begin{equation}\label{intro-4}
\frac{\partial u}{\partial t} \ = \ {\cal L}u-\langle a\rangle u , \quad u = u (t, x), \quad x \in \mathds{R}^d, \quad t \geqslant 0, \qquad u(0,x) = u_0 (x) \geqslant 0,
\end{equation}
where $\langle a\rangle=\int\limits_{\mathds{R}^d}a(z)\,dz$.
The potential $V(x)$ in (\ref{intro-4}) is a real-valued function defined as the difference between the birth and death rates at point $x \in \mathds{R}^d$.  In spatially inhomogeneous environments the birth and death rates are functions of the position in the space
and thus the potential $V(x)$ need not be equal to a constant.   It is assumed that at infinity the birth and death rates coincide so that $V(x)$ tends to zero as $|x|\to\infty$.

This gives rise to an interesting mathematical question that reads: find a class of potentials $V(\cdot)$ and dispersal kernels $a(\cdot)$
for which the operator $\mathcal{L}-\langle a\rangle$ has a positive spectrum and thus the density of population shows an exponential growth
everywhere in the space.
The problem of the existence of a positive eigenvalue
has been discussed in
\cite{BC, KPMZh, KMV}, and the structure of the principal eigenfunction has been investigated in \cite{KPMZh18}.

It is known, see, for instance, \cite[Theorem 19.1]{Bhat_Rao},  that in the region $x\lesssim \sqrt{t}$ the large time asymptotics of the fundamental solution of the
equation
\begin{equation}\label{intro-6}
\partial_t v(x,t)=\int_{\mathds R^d}a(x-y)\big(v(y,t)-v(x,t)\big)\,dy
\end{equation}
 coincides with that of the heat kernel of the operator $\mathrm{div}\big(\hat a\nabla\big)$ with
 $$
 \hat a_{ij}=\int_{\mathds R^d} z_iz_ja(z)dz.
$$
Therefore, it is natural to consider the operator on the right-hand side of (\ref{intro-6}) as an approximation of the Laplacian
and to call $\mathcal{L}$ a non-local Schr\"odinger operator.

The operator $\cL$ defined in (\ref{intro-1}) can be regarded as a perturbation of the convolution operator $\cA$ by the potential $V$ or vice versa, a perturbation of the multiplication operator by the convolution one. From this point of view, there is a clear analogy with spectral properties of the Schr\"odinger operators perturbed by potentials or other lower order perturbations. There is a vast literature and hundreds of works devoted to such operators. Not trying to mention all of them, we just cite few classical works \cite{Simon1}, \cite{Simon2}, \cite{Klaus}, \cite{Klaus2} and a recent book \cite{ExnKov}. However, there exists a fundamental difference between the classical Schr\"odinger operator and the non-local Schr\"odinger operator of the form (\ref{intro-1}). The potential term in the classical Schr\"odinger operator is a relatively compact perturbation of the Laplacian, while
 the terms of the operator $\mathcal{L}$ in (\ref{intro-1}) are equipollent.
As our main results show, this fact makes
  the spectral pictures for such operators and for classical Schr\"odinger operators rather different.

In the present paper    two conditions are imposed  on the kernel $a(\cdot)$. Namely, we assume that
$a(-z)=\overline{a(z)}$ for all $z\in\mathds{R}^d$, and $a\in L_1(\mathds{R}^d)$. The first condition makes
the operator $\mathcal{L}$ symmetric, while the second one ensures that $A$ is bounded in $L_2(\mathds{R}^d)$.
The function $V$  is  real, continuous, and vanishes at infinity.
Observe that the operator $\cA$ is unitary equivalent to the operator of multiplication by the Fourier image of the
function $a(\cdot)$ and that this Fourier image is a continuous function that vanishes at infinity.

Our first result characterizes the  essential spectrum of operator $\cL$ in $L_2(\mathds{R}^d)$. We show that
$\essspec(\cL)$ is the union of the spectra of $\cA$ and of the multiplication operator $u\mapsto Vu$.

Then we provide a number of sufficient conditions for the existence of the discrete spectrum and obtain several upper and lower bounds for
the number of points of the discrete spectrum.
The lower bounds rely on the detailed analysis of the  convolution operator and the minimax principle.
In order to prove an upper bound, we use a modification of the Birman-Schwinger principle
 adapted to the non-local operators studied here.

 We also pay a special attention to the cases, when the operator $\cL$ possesses infinitely many discrete eigenvalues accumulating to the edges of the essential spectrum. We provide various sufficient conditions guaranteeing such
a behaviour of the spectrum. In particular, these conditions show that the class of non-local Schr\"odinger operators having infinitely many points of the discrete spectrum is  rather wide in contrast with classical differential operators with lower order perturbations.

\section{Problem setup
and main results}

Let
$V=V(x)$ and $a=a(x)$ be given measurable complex-valued functions defined on $\mathds{R}^d$. We assume that the function $a$ belongs to $L_1(\mathds{R}^d)$
and satisfies the identity
\begin{equation}\label{2.1}
a(-x)=\overline{a(x)}.
\end{equation}
By $\cF$ we denote a Fourier transform on $L_1(\mathds{R}^d)$ defined
by the formula
\begin{equation*}
\cF[u](x):=
\int\limits_{\mathds{R}^d} u(\xi)e^{-\iu x\cdot \xi}\,d\xi.
\end{equation*}
The same symbol stands for the Fourier transform extended to $L_2(\mathds{R}^d)$.
We then assume that the function $V$ is an image of some function $\hat{V}\in L_1(\mathds{R}^d)$ satisfying also condition (\ref{2.1}), that is,
\begin{equation*}
V=\cF[\hat{V}],\qquad \hat{V}(-x)=\overline{\hat{V}(x)}.
\end{equation*}
We also denote
\begin{equation*}
\hat{a}(\xi):=\cF[a](\xi).
\end{equation*}

The main object of our study is an operator in $L_2(\mathds{R}^d)$ defined by the formula
\begin{equation*}
\cL:=\cL_{a\star} + \cL_V,\qquad (\cL_{a\star} u)(x):=
\int\limits_{\mathds{R}^d} a(x-y)u(y)\,dy,\qquad (\cL_V u)(x):=V(x)u(x).
\end{equation*}
We shall show, see Lemma~\ref{lmL}, that this operator is bounded in $L_2(\mathds{R}^d)$ and is self-adjoint. Our main aim is to describe the structure of the spectrum of this operator depending on the properties of the functions $a$ and $V$.

Observe that under the above assumptions on $a$ and $V$ the functions $\hat{a}$ and $V$ are real-valued, bounded, continuous and decaying at infinity. In view of these properties the following quantities are well defined:
\begin{equation}\label{2.7}
a_{\min}:=\inf\limits_{\mathds{R}^d} \hat{a},\qquad a_{\max}:=\sup\limits_{\mathds{R}^d} \hat{a},\qquad
V_{\min}:=\inf\limits_{\mathds{R}^d} V,\qquad V_{\max}:=\sup\limits_{\mathds{R}^d} V.
\end{equation}
It follows from the aforementioned properties of
$a$ and $V$ that
\begin{equation*}
 a_{\min}\leqslant 0\leqslant a_{\max},\qquad V_{\min}\leqslant 0\leqslant V_{\max}.
\end{equation*}

By $\essspec(\,\cdot\,)$ we denote an essential spectrum of an operator, while $\discspec(\,\cdot\,)$ stands for a discrete spectrum. The spectrum of an operator is denoted by $\spec(\,\cdot\,)$. Let $Q_r(x_0)$ be a cube in $\mathds{R}^d$ with a side $r$ centered at a point $x_0$.

Our first result describes the essential spectrum of the operator $\cL$.

\begin{theorem}\label{thES}
The essential spectrum of the operator $\cL$ coincides with the segment $[\mu_0,\mu_1]$, where $\mu_0:=\min\{a_{\min}, V_{\min}\}$, $\mu_1:=\max\{a_{\max}, V_{\max}\}$. The discrete spectrum of the operator $\cL$ can be located only in the semi-intervals $[a_{\min}+V_{\min},\mu_0)$ and $(\mu_1,a_{\max}+V_{\max}]$ and it can accumulate to the points $\mu_0$ and $\mu_1$ only.
\end{theorem}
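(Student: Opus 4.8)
The plan is to compute $\essspec(\cL)$ by a Weyl-sequence argument in both directions. For the inclusion $[\mu_0,\mu_1]\subseteq\essspec(\cL)$, I would construct singular Weyl sequences. Fix $\lambda\in[\mu_0,\mu_1]$. If $\lambda\in[a_{\min},a_{\max}]=\overline{\hat a(\mathds{R}^d)}$ (recall $\hat a$ is continuous, real, vanishing at infinity, so its range together with $0$ fills this segment), pick $\xi_0$ with $\hat a(\xi_0)=\lambda$ and build wave packets $u_n(x)=n^{-d/2}\chi\big((x-x_n)/n\big)e^{\iu\xi_0\cdot x}$ centered at points $x_n\to\infty$ chosen so that $V(x_n)\to 0$; the slowly varying cutoff makes $\|(\cL_{a\star}-\lambda)u_n\|\to0$ because convolution against $a$ acts in the Fourier variable essentially as multiplication by $\hat a(\xi_0)$ on such packets, while $\|\cL_V u_n\|\to0$ since $V\to0$ near $x_n$; these $u_n$ are normalized and weakly convergent to zero, hence $\lambda\in\essspec(\cL)$. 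If instead $\lambda\in[V_{\min},V_{\max}]=\overline{V(\mathds{R}^d)}$, pick $x_\lambda$ with $V(x_\lambda)=\lambda$ and use highly oscillatory bumps $u_n(x)=n^{d/2}\chi\big(n(x-x_\lambda)\big)e^{\iu n\,e\cdot x}$ concentrating at $x_\lambda$: continuity of $V$ gives $\|(\cL_V-\lambda)u_n\|\to0$, while $\|\cL_{a\star}u_n\|\to0$ because $\widehat{u_n}$ is supported near frequency $n\,e\to\infty$ where $\hat a$ is small. Since $[\mu_0,\mu_1]$ is exactly the union of these two segments, this yields $[\mu_0,\mu_1]\subseteq\essspec(\cL)$.

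For the reverse inclusion and the localization of the discrete spectrum, I would work with the resolvent away from $[\mu_0,\mu_1]$. The key structural fact is that $\cL=\cL_{a\star}+\cL_V$ is a sum of two bounded self-adjoint operators with $\spec(\cL_{a\star})=[a_{\min},a_{\max}]$ (unitary equivalence, via $\cF$, to multiplication by $\hat a$) and $\spec(\cL_V)=[V_{\min},V_{\max}]$ (multiplication by $V$, range closure). Hence $\spec(\cL)\subseteq[a_{\min}+V_{\min},\,a_{\max}+V_{\max}]$ by the numerical-range/spectral-bound for sums of self-adjoint operators, which already gives the outer bound on where anything can live. To show that outside $[\mu_0,\mu_1]$ the spectrum is purely discrete, I would write, for $\lambda>\mu_1$ say,
\[
\cL-\lambda=(\cL_V-\lambda)+\cL_{a\star}=(\cL_V-\lambda)\big(I+(\cL_V-\lambda)^{-1}\cL_{a\star}\big),
\]
where $(\cL_V-\lambda)^{-1}$ is bounded since $\lambda>V_{\max}$. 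The operator $(\cL_V-\lambda)^{-1}\cL_{a\star}$ is not compact in general, but I claim $\cL-\lambda$ is Fredholm of index zero for every $\lambda>\mu_1$: indeed, decompose $V=V_1+V_2$ with $V_2$ compactly supported continuous and $\|V_1\|_{L_\infty}$ as small as we like (possible since $V\to0$); then modulo a compact operator (multiplication by a compactly supported $V_2$ composed with the smoothing convolution $\cL_{a\star}$ — here one uses $a\in L_1$ together with the Riesz–Kolmogorov criterion to get compactness of $\cL_{a\star}\circ(\text{mult.\ by compactly supported }L_\infty)$) the operator $\cL-\lambda$ equals $\cL_{a\star}+\cL_{V_1}-\lambda$, which is boundedly invertible because $\|\cL_{a\star}+\cL_{V_1}\|\le a_{\max}+\|V_1\|_{L_\infty}<\lambda$ once $\|V_1\|_{L_\infty}$ is chosen small. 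Thus $\cL-\lambda$ is a compact perturbation of an invertible operator, hence Fredholm of index $0$, so $\lambda\notin\essspec(\cL)$. Symmetrically for $\lambda<\mu_0$. This proves $\essspec(\cL)=[\mu_0,\mu_1]$ and shows that in $(-\infty,\mu_0)\cup(\mu_1,\infty)$ the spectrum consists of isolated eigenvalues of finite multiplicity, i.e.\ discrete spectrum, which moreover lies in $[a_{\min}+V_{\min},\mu_0)\cup(\mu_1,a_{\max}+V_{\max}]$ by the spectral bound above and can accumulate only at the endpoints $\mu_0,\mu_1$ of $\essspec(\cL)$.

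The main obstacle is the compactness step: unlike the classical Schrödinger case, neither term of $\cL$ is relatively compact with respect to the other, so one cannot simply invoke Weyl's theorem on invariance of the essential spectrum. The decisive point is that the product of the convolution operator $\cL_{a\star}$ with $a\in L_1$ and a multiplication operator by a compactly supported bounded function is compact; establishing this cleanly (e.g.\ approximating $a$ in $L_1$ by smooth compactly supported kernels, for which the product has an $L_2$ integral kernel, and passing to the limit in operator norm) is where the real work lies. Once that lemma is in hand, the Fredholm argument and the spectral bounds are routine, and the two Weyl-sequence constructions complete the identification of $\essspec(\cL)$ together with the claimed location of $\discspec(\cL)$.
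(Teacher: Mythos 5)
Your Weyl-sequence constructions for the inclusion $[\mu_0,\mu_1]\subseteq\essspec(\cL)$ are fine (the paper obtains the convolution half more cheaply by conjugating with $\cF$, which swaps the roles of $a$ and $V$, but your direct wave packets work), and the localization of the discrete spectrum via the bound $\spec(\cL)\subseteq[a_{\min}+V_{\min},a_{\max}+V_{\max}]$ plus standard self-adjoint spectral theory matches the paper. The genuine gap is in the reverse inclusion. In your decomposition $\cL-\lambda=(\cL_{a\star}+\cL_{V_1}-\lambda)+\cL_{V_2}$ the error term is the bare multiplication operator by $V_2$, and a multiplication operator by a nonzero function is never compact on $L_2(\mathds{R}^d)$; the compactness lemma you invoke concerns the \emph{composition} $\cL_{a\star}\,\cL_{V_2}$ (or $\cL_{V_2}\,\cL_{a\star}$), which is not what appears as the difference here, and no such composition is produced by your additive splitting. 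The step is not merely under-justified but false in substance: if multiplication by a compactly supported $V_2$ were a compact perturbation, then $\essspec(\cL)$ would equal $\essspec(\cL_{a\star}+\cL_{V_1})$, which would erase the contribution $[V_{\min},V_{\max}]$ of the potential to the essential spectrum -- contradicting exactly the theorem you are proving (consider a deep compactly supported well and a small kernel $a$). This is the pitfall the paper emphasizes: neither term of $\cL$ is relatively compact with respect to the other, so no truncation of $V$ can be discarded as compact.

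The repair is essentially the paper's argument, and it does use your key lemma, but in the right place. For $\lambda\notin[a_{\min},a_{\max}]\cup[V_{\min},V_{\max}]$ one has $\lambda\ne0$ (since $V$ vanishes at infinity, $0\in[V_{\min},V_{\max}]$), and one writes $\frac{1}{V-\lambda}=-\frac{1}{\lambda}+\frac{V_1}{\lambda}$ with $V_1:=\frac{V}{V-\lambda}$, a bounded function vanishing at infinity. Then
\begin{equation*}
(\cL_V-\lambda)^{-1}(\cL-\lambda)=-\tfrac{1}{\lambda}\big(\cL_{a\star}-\lambda\big)+\tfrac{1}{\lambda}\,\cL_{V_1}\cL_{a\star},
\end{equation*}
where $\cL_{V_1}$ denotes multiplication by $V_1$. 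The first operator on the right is boundedly invertible because $\lambda\notin\spec(\cL_{a\star})=[a_{\min},a_{\max}]$, and the second is compact precisely by the lemma you isolated (a decaying multiplier composed with an $L_1$-convolution). Hence $\cL-\lambda$ is Fredholm of index zero, so $\lambda\notin\essspec(\cL)$; the paper phrases the same identity as a contradiction with the non-compactness of a Weyl sequence for $\cL$ at $\lambda$. With this substitution your outline closes.
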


The rest of our results describes the discrete spectrum of $\mathcal{L}$. First, we provide sufficient condition ensuring its existence.

\begin{theorem}\label{thDS1}
Let $x_0$ be a point of the global minimum of the function $V$, and assume that $V_{\min}\leqslant a_{\min}$. Assume furthermore that there exists $\d>0$ such that
\begin{equation}\label{2.8}
 \int\limits_{Q_2(0)} \prod\limits_{i=1}^{d} (1-|x_i|) \RE a(\d x)\,dx+
 \d^{-d} \int\limits_{Q_1(0)} \big(V(x_0+\d x)-V_{\min}\big)\,dx<0.
\end{equation}
Then the discrete spectrum of the operator $\cL$ in the semi-interval $[a_{\min}+V_{\min},\mu_0)$ is non-empty.
\end{theorem}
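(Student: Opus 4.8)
\emph{Strategy.} I would argue via the minimax principle. By Lemma~\ref{lmL} the operator $\cL$ is bounded and self-adjoint, so its quadratic form is defined on all of $L_2(\mathds{R}^d)$. Since $\cL_{a\star}$ is unitarily equivalent through $\cF$ to multiplication by the real-valued function $\hat a\geqslant a_{\min}$, and $\cL_V$ is multiplication by $V\geqslant V_{\min}$, we have $\cL\geqslant a_{\min}+V_{\min}$ in the form sense, hence $\spec(\cL)\subseteq[a_{\min}+V_{\min},+\infty)$; on the other hand Theorem~\ref{thES} and the hypothesis $V_{\min}\leqslant a_{\min}$ give $\inf\essspec(\cL)=\mu_0=V_{\min}$. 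Thus it is enough to produce a function $u\in L_2(\mathds{R}^d)$, $u\not\equiv0$, satisfying
\begin{equation*}
\langle\cL u,u\rangle<V_{\min}\,\|u\|_{L_2(\mathds{R}^d)}^2 ,
\end{equation*}
for then the minimax principle yields $\inf\spec(\cL)<V_{\min}=\inf\essspec(\cL)$, so the number $\inf\spec(\cL)$ is a point of $\discspec(\cL)$, and by the inclusion above it lies in $[a_{\min}+V_{\min},\mu_0)$, which is therefore non-empty.

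\emph{Trial function and computation.} As trial function I would take the indicator $u:=\mathds{1}_{Q_\d(x_0)}$ of the cube $Q_\d(x_0)$, with $\d$ the number from \eqref{2.8} and $x_0$ a global minimum point of $V$. Then $\|u\|_{L_2(\mathds{R}^d)}^2=\d^d$, and rescaling $x=x_0+\d y$ gives
\begin{equation*}
\langle\cL_V u,u\rangle-V_{\min}\|u\|_{L_2(\mathds{R}^d)}^2=\int_{Q_\d(x_0)}\big(V(x)-V_{\min}\big)\,dx=\d^d\int_{Q_1(0)}\big(V(x_0+\d y)-V_{\min}\big)\,dy .
\end{equation*}
For the non-local term, substituting $x=x_0+\d s$, $y=x_0+\d t$ and then passing to $z=s-t$,
\begin{equation*}
\langle\cL_{a\star}u,u\rangle=\d^{2d}\int_{Q_1(0)}\int_{Q_1(0)}a\big(\d(s-t)\big)\,ds\,dt=\d^{2d}\int_{Q_2(0)}a(\d z)\prod_{i=1}^d(1-|z_i|)\,dz ,
\end{equation*}
where I use that $\big|Q_1(0)\cap(Q_1(0)-z)\big|=\prod_{i=1}^d(1-|z_i|)_+$, which is supported in $Q_2(0)$. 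Since $\cL_{a\star}$ is self-adjoint and $u$ is real, this integral is real, so $a(\d z)$ may be replaced by $\RE a(\d z)$ (equivalently, use $a(-z)=\overline{a(z)}$ and evenness of the weight). Adding the two contributions,
\begin{equation*}
\langle\cL u,u\rangle-V_{\min}\|u\|_{L_2(\mathds{R}^d)}^2=\d^{2d}\left(\int_{Q_2(0)}\RE a(\d z)\prod_{i=1}^d(1-|z_i|)\,dz+\d^{-d}\int_{Q_1(0)}\big(V(x_0+\d y)-V_{\min}\big)\,dy\right),
\end{equation*}
and the expression in parentheses is precisely the left-hand side of \eqref{2.8}, hence strictly negative. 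This is the required inequality.

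\emph{Main obstacle.} The whole argument is elementary once one step is handled correctly: the evaluation of the non-local quadratic form on the indicator of a cube. The key observation is that $\int_{Q_1(0)}\int_{Q_1(0)}a(\d(s-t))\,ds\,dt$ is the pairing of $a(\d\,\cdot\,)$ with the autocorrelation of $\mathds{1}_{Q_1(0)}$, and that this autocorrelation equals exactly the product tent $\prod_{i=1}^d(1-|z_i|)_+$ on $Q_2(0)$ appearing in \eqref{2.8}. This computation is what forces the precise shape of hypothesis \eqref{2.8}; after it, the conclusion follows at once from the minimax principle together with Theorem~\ref{thES}.
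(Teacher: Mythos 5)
Your proposal is correct and follows essentially the same route as the paper: the paper also applies the minimax principle to the (normalized) indicator $\phi_\d(\cdot-x_0)$ of the cube $Q_\d(x_0)$, rescales, uses $a(-x)=\overline{a(x)}$ to pass to $\RE a$, and reduces the double integral to the tent-weighted integral over $Q_2(0)$ (via the change of variables $(x,y)\mapsto(x-y,x+y)$, which is the same autocorrelation computation you describe), obtaining exactly $\d^d$ times the left-hand side of \eqref{2.8}. Your preliminary remarks that $\cL\geqslant a_{\min}+V_{\min}$ and $\inf\essspec(\cL)=\mu_0=V_{\min}$ match the paper's use of Theorem~\ref{thES}, so no gap remains.
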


Once we know that the discrete spectrum is non-empty, we are interested in the number of the discrete eigenvalues. Various lower bounds for this number are provided in Theorems \ref{thDS5}--\ref{thDS2} below.

We fix some $r>0$ and denote
\begin{equation}\label{2.18}
a_n:=(2r)^{-d}\int\limits_{Q_{2r}(0)} a(x) e^{-\frac{\pi\iu}{r}n\cdot x} \,dx,\qquad n\in\mathds{Z}^d.
\end{equation}
Since $a\in L_1(Q_{2r}(0))$, all Fourier coefficients $a_n$ are well-defined. Employing identity (\ref{2.1}), it is straightforward to confirm that all constants $a_n$ are real-valued. We then introduce the following sets of indices:
\begin{equation*}
\mathds{J}_0:=\{n\in\mathds{Z}^d:\, a_{2n}<0\}
\end{equation*}
and assume that this set is not empty.

 Supposing  that $x_0$ is a point of the global minimum of the function $V(x)$, that is, $V_{\min}=V(x_0)$, we introduce
\begin{equation}\label{2.9}
V_n:=\int\limits_{Q_r(x_0)} (V(x+x_0)-V_{\min}) e^{\frac{2\pi\iu}{r}n\cdot x}\,dx,\qquad \overline{V_{-n}}=V_n,
\end{equation}
and, given a subset $\mathds{J}\subset \mathds{J}_0$, we denote
\begin{equation*}
\nu_\mathds{J}:=r^{-d}\sup\limits_{n\in \mathds{J}} \sum\limits_{m\in \mathds{J}} |V_{n-m}|.
\end{equation*}

\begin{theorem}\label{thDS5}
Assume that $V_{min}\leqslant a_{min}$, $x_0$ is a point of the global minimum of the function $V(x)$, and
 there exists a subset $\mathds{I}\subset \mathds{J}_0$ such that
\begin{equation}\label{2.16}
r^d\max\limits_{n\in \mathds{I}} a_{2n} + (2r)^d \sup\limits_{n\in\mathds{Z}^d\setminus (2\mathds{Z})^d} a_n + \nu_\mathds{I}<0.
\end{equation}
Then the operator $\cL$ possesses at least $\# \mathds{I}$ eigenvalues below $V_{\min}$, where $\# \mathds{I}$ is the total number of indices in the set $\mathds{I}$. The lowest eigenvalue $\l_{\min}$ of the operator $\cL$ satisfies the upper bound
\begin{equation}\label{2.17}
\l_{\min}\leqslant r^d \min\limits_{n\in\mathds{Z}^d} a_{2n} + (2r)^d \sup\limits_{n\in\mathds{Z}\setminus (2\mathds{Z})^d} a_n + r^{-d} \int\limits_{Q_r(x_0)} (V(x)-V_{\min})\,dx.
\end{equation}
\end{theorem}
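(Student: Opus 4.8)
The plan is to prove the lower bound on the number of eigenvalues below $V_{\min}$ via the minimax principle by constructing an explicit $\#\mathds{I}$-dimensional trial subspace on which the quadratic form of $\cL - V_{\min}$ is negative definite, and to prove the upper bound \eqref{2.17} on $\l_{\min}$ by evaluating the Rayleigh quotient on one cleverly chosen trial function. The natural trial functions are supported in the cube $Q_{2r}(x_0)$ (or $Q_r(x_0)$ for the upper bound) and are built from the Fourier basis $e^{\frac{\pi\iu}{r}n\cdot(x-x_0)}$ adapted to that cube, so that the convolution part $\cL_{a\star}$ acts in a controlled way through the Fourier coefficients $a_n$ defined in \eqref{2.18}. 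Concretely, for each $n\in\mathds{I}$ I would take $\vp_n(x):=\mathds{1}_{Q_{2r}(x_0)}(x)\,e^{\frac{\pi\iu}{r}(2n)\cdot(x-x_0)}$, whose Fourier expansion on the cube isolates the frequency $2n$; the span of $\{\vp_n:n\in\mathds{I}\}$ has dimension $\#\mathds{I}$ since these exponentials are orthogonal on $Q_{2r}(x_0)$.

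First I would compute, for a general $u=\sum_{n\in\mathds{I}}c_n\vp_n$ in this subspace, the three contributions to $\langle(\cL-V_{\min})u,u\rangle$. The diagonal part of the convolution form, coming from the frequencies $2n$ with $n\in\mathds{I}$, produces $\sum_n|c_n|^2 (2r)^d\langle\text{something}\rangle$; the key point is that after normalizing $\|\vp_n\|_{L_2}^2=(2r)^d$, the principal term is bounded above by $r^d\max_{n\in\mathds{I}}a_{2n}\cdot\|u\|^2$ up to the extra factors — here one must be careful with the relation between the $(2r)$-periodic Fourier coefficients and the quadratic form $\int\int a(x-y)u(y)\overline{u(x)}$, using that the convolution restricted to functions supported in $Q_{2r}(x_0)$ only sees $a$ on $Q_{4r}$, and splitting the Fourier series of $a$ on a cube of side $4r$ into even-index part (contributing the $a_{2n}$) and odd-index part (bounded by the $\sup$ over $\mathds{Z}^d\setminus(2\mathds{Z})^d$ term). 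The potential part contributes $\int_{Q_{2r}(x_0)}(V(x)-V_{\min})|u(x)|^2\,dx$, which after expanding $|u|^2=\sum_{n,m}c_n\overline{c_m}e^{\frac{2\pi\iu}{r}(n-m)\cdot(x-x_0)}$ becomes $\sum_{n,m}c_n\overline{c_m}V_{n-m}$ (with $V_k$ as in \eqref{2.9}, noting the integral over $Q_r(x_0)$ versus $Q_{2r}(x_0)$ — one extends periodically so only the $Q_r$-coefficients matter, or equivalently one works with trial functions built from the $\frac{2\pi}{r}$-lattice on $Q_r(x_0)$), and this is dominated in absolute value by $\nu_\mathds{I}\,r^d\sum_n|c_n|^2$ via the standard Schur-test bound $|\sum c_n\overline{c_m}V_{n-m}|\le(\sup_n\sum_m|V_{n-m}|)\sum_n|c_n|^2$. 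Adding the three pieces and using $\|u\|^2=(2r)^d\sum|c_n|^2$, the hypothesis \eqref{2.16} forces $\langle(\cL-V_{\min})u,u\rangle<0$ for all nonzero $u$ in the subspace; by the minimax principle $\cL$ has at least $\#\mathds{I}$ eigenvalues strictly below $V_{\min}$, and since Theorem~\ref{thES} together with $V_{\min}\le a_{\min}$ gives $\essspec(\cL)=[\mu_0,\mu_1]$ with $\mu_0=V_{\min}$, these are genuine discrete eigenvalues.

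For the upper bound \eqref{2.17} on $\l_{\min}$, I would instead test the Rayleigh quotient on the single function $\psi(x):=\mathds{1}_{Q_r(x_0)}(x)$. Then $\|\psi\|^2=r^d$; the potential part gives $\int_{Q_r(x_0)}(V(x)-V_{\min})\,dx$, contributing after normalization exactly the last term $r^{-d}\int_{Q_r(x_0)}(V(x)-V_{\min})\,dx$; and the convolution part $\int\int a(x-y)\psi(y)\overline{\psi(x)}\,dy\,dx$ equals the $n=0$ Fourier coefficient contribution plus corrections, which I would bound exactly as above by $r^d\min_n a_{2n}+(2r)^d\sup_{n\notin(2\mathds{Z})^d}a_n$ after dividing by $\|\psi\|^2=r^d$ — actually the $n=0$ term is $\ge r^d\min_n a_{2n}$ only after the even/odd split, so I take the worst-case constant term to be $\min_n a_{2n}$ and absorb the oscillatory remainder into the $\sup$ term. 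Dividing by $\|\psi\|^2$ and invoking $\l_{\min}\le\langle\cL\psi,\psi\rangle/\|\psi\|^2$ yields \eqref{2.17}.

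The main obstacle I anticipate is the bookkeeping in the convolution quadratic form: making precise how $\int\int a(x-y)u(y)\overline{u(x)}\,dx\,dy$, for $u$ supported in $Q_{2r}(x_0)$ and built from the exponential basis, decomposes into a diagonal part governed by the $a_{2n}$ and an off-diagonal/odd-frequency remainder uniformly controlled by $(2r)^d\sup_{n\in\mathds{Z}^d\setminus(2\mathds{Z})^d}a_n$. The subtlety is that the difference $x-y$ ranges over a cube of twice the side, so one must expand $a$ in a Fourier series on $Q_{4r}$ (or equivalently use that $\langle\cL_{a\star}u,u\rangle=\int\hat a(\xi)|\hat u(\xi)|^2\,d\xi$ and analyze $\hat u$ for $u$ a finite exponential sum times an indicator), and then reconcile those coefficients with the $a_n$ of \eqref{2.18}; the even/odd index split and the sign hypothesis $n\in\mathds{J}_0$ (i.e. $a_{2n}<0$) are exactly what make the diagonal term negative and the remainder absorbable. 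Everything else — orthogonality of the $\vp_n$, the Schur bound for the potential, the normalization constants — is routine.
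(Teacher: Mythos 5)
Your overall architecture (exponential trial subspace, even/odd Fourier split, Schur bound for the potential, minimax) is the right one and is what the paper does, but there is a genuine gap in how you set it up: you take the trial functions $\vp_n=\mathds{1}_{Q_{2r}(x_0)}e^{\frac{2\pi\iu}{r}n\cdot(x-x_0)}$ supported in $Q_{2r}(x_0)$. For such $u$ the convolution form $\iint a(x-y)u(y)\overline{u(x)}\,dx\,dy$ involves $a$ on the cube $Q_{4r}(0)$, whereas the coefficients $a_n$ of (\ref{2.18}) and hypothesis (\ref{2.16}) only control $a$ on $Q_{2r}(0)$; the restriction of $a$ to $Q_{4r}(0)\setminus Q_{2r}(0)$ is completely unconstrained (it could be large and positive), so the ``reconciliation'' of $Q_{4r}$-Fourier coefficients with the $a_n$ that you defer to is not possible in general, and negativity of the form on your subspace simply does not follow from (\ref{2.16}). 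The correct device, which is the paper's proof, is to support the trial functions in the \emph{smaller} cube: $u(x)=r^{-d}\sum_{n\in\mathds{I}}u_n e^{\frac{2\pi\iu}{r}n\cdot(x-x_0)}$ on $Q_r(x_0)$, extended by zero. Then $x-y$ ranges over $Q_{2r}(0)$, and the same function, viewed on $Q_{2r}(x_0)$, has a second Fourier expansion on the $\frac{\pi}{r}$-lattice with coefficients $U_n$ satisfying $U_{2n}=u_n$; this identity is exactly what lets one write $(\cL_{a\star}u,u)\leqslant\sum_n a_n|U_n|^2$ (after mollifying $a$ by some $a^\eta\in C_0^\infty(Q_{2r}(0))$ to justify the term-by-term substitution for a merely $L_1$ kernel, then letting $\eta\to0$) and split it into the even part $\sum_n a_{2n}|u_n|^2\leqslant r^d\max_{n\in\mathds{I}}a_{2n}\|u\|^2$ and the odd part, bounded by $(2r)^d\sup_{n\in\mathds{Z}^d\setminus(2\mathds{Z})^d}a_n\,\|u\|^2$ via the Parseval identity on $Q_{2r}$ (one also needs that this supremum is nonnegative, which holds since $a_n\to0$). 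With support in $Q_r(x_0)$ the potential term matches the coefficients $V_{n-m}$ of (\ref{2.9}) exactly, so your ``extend periodically / $Q_r$ versus $Q_{2r}$'' fudge disappears and your Schur-type bound by $\nu_{\mathds{I}}$ goes through; the minimax step is then as you describe.

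The upper bound (\ref{2.17}) also does not follow from your single test function $\psi=\mathds{1}_{Q_r(x_0)}$: its even-frequency contribution is $r^d a_0$, and the inequality $a_0\geqslant\min_{n\in\mathds{Z}^d}a_{2n}$ goes the wrong way, so you cannot ``take the worst-case constant term to be $\min_n a_{2n}$'' in an upper bound — with the constant alone you only get (\ref{2.17}) with $a_0$ in place of the minimum. To obtain (\ref{2.17}) one must test with the single exponentials $\mathds{1}_{Q_r(x_0)}e^{\frac{2\pi\iu}{r}n\cdot(x-x_0)}$ for each $n\in\mathds{Z}^d$, which give $\l_{\min}\leqslant r^d a_{2n}+(2r)^d\sup_{m\in\mathds{Z}^d\setminus(2\mathds{Z})^d}a_m+r^{-d}\int_{Q_r(x_0)}(V(x)-V_{\min})\,dx$, and then minimize over $n$; this is what the paper does by specializing its estimate (\ref{4.40}) to one-element index sets.
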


In the next theorem we consider the case of a sufficiently smooth kernel $a$. Namely, given $N\in\mathds{N}$, we suppose that $a\in C^{2N+1}(Q_r(0))$ for some fixed $r>0$.
We
introduce a quadratic form
\begin{equation}\label{2.12}
\fa_N[\z]:=\sum\limits_{\substack{n,m\in \mathds{Z}_+^d \\ |m|, |n|\leqslant N}} (-1)^{|n|}
\p^{n+m} a(0)\z_m\overline{\z_n},\qquad \z:=(\z_n)_{ n\in\mathds{Z}_+^d,\, |n|\leqslant N},
\end{equation}
on $\mathds{C}^{M(N)}$, where $|n|=n_1+n_2+\ldots+n_d$ and
\begin{equation*}
M(N):=\#\big\{n\in\mathds{Z}_+^d:\, |n|\leqslant N\big\},\qquad n!=n_1!\cdot\ldots \cdot n_d!\quad\text{for}\quad n=(n_1,\ldots,n_d)\in\mathds{Z}_+^d.
\end{equation*}

Let $x_0$ be a point of the global minimum of $V$. We then let
\begin{equation}\label{2.13}
h_N(\d):=\max\limits_{\substack{n\in\mathds{Z}_+^d \\ |n|\leqslant 2N}} \bigg|
\int\limits_{Q_1(0)} \big(V(x_0+\d x)-V(x_0)\big)x^n\,dx\bigg|.
\end{equation}

\begin{theorem}\label{thDS4}
Let $x_0$ be a point of the global minimum of $V(\cdot)$, and $V_{\min}\leqslant a_{\min}$. Assume that $a\in C^{2N+1}(Q_r(0))$ with some $r>0$, the identity
\begin{equation}\label{2.14}
\lim\limits_{\d\to0} \frac{h_N(\d)}{\d^{2N+d}}=0
\end{equation}
holds and there exists a subspace $S$ in $\mathds{C}^{M(N)}$, on which
the form $\fa_N$ defined in (\ref{2.12}) is strictly negative.
Then the operator $\cL$ possesses at least $\dim S$ eigenvalues in the interval $[a_{\min}+V_{\min}, \mu_0)$.
\end{theorem}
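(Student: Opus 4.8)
The plan is to use the minimax principle: to show that $\cL$ has at least $\dim S$ eigenvalues in $[a_{\min}+V_{\min},\mu_0)$, it suffices (since by Theorem~\ref{thES} the essential spectrum starts at $\mu_0$ and $a_{\min}+V_{\min} = a_{\min}+V_{\min}\le \mu_0$ under the hypothesis $V_{\min}\le a_{\min}$) to exhibit a $(\dim S)$-dimensional subspace $\mathcal{T}$ of $L_2(\mathds R^d)$ on which the quadratic form $(\cL u,u) - \mu_0\|u\|^2$ is strictly negative; more precisely, one wants $(\cL u,u)\le (\mu_0-c)\|u\|^2$ on $\mathcal{T}$ for some $c>0$. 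Since $\mu_0 = a_{\min}+V_{\min} - (V_{\min} - ??)$... actually under $V_{\min}\le a_{\min}$ we have $\mu_0 = \min\{a_{\min},V_{\min}\} = V_{\min}$, so the target is to produce trial functions $u$ with $(\cL u,u) < V_{\min}\|u\|^2$, i.e. $(\cL_{a\star}u,u) + ((V-V_{\min})u,u) < 0$.

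The construction of $\mathcal{T}$ is the heart of the matter. Given $\delta>0$ small, I would take trial functions supported in the small cube $Q_\delta(x_0)$ of the form $u_{n}(x) = \delta^{-d/2}\psi_n\big((x-x_0)/\delta\big)$ for a basis $\{\psi_n\}$ associated with the subspace $S\subset\mathds C^{M(N)}$; concretely, to a vector $\zeta=(\zeta_n)_{|n|\le N}\in S$ we associate the polynomial-type profile built so that its moments up to order $N$ reproduce the coordinates $\zeta_n$, rescaled into $Q_\delta(x_0)$. The two contributions are then estimated separately. For the potential term, $\int_{Q_\delta(x_0)}(V(x)-V_{\min})|u(x)|^2\,dx$ is controlled by $h_N(\delta)$ after Taylor-expanding $|u|^2$ (a polynomial of degree $\le 2N$ in the stretched variable), giving a bound $o(\delta^{2N+d})\cdot\delta^{-d} = o(\delta^{2N})$ by hypothesis~(\ref{2.14}) — wait, one must be careful with the normalization; the point is that after the change of variables $x = x_0+\delta y$ this term becomes $\delta^{-d}\int_{Q_1(0)}(V(x_0+\delta y)-V(x_0))|\psi(y)|^2\,dy$, and expanding $|\psi(y)|^2$ in monomials $y^n$, $|n|\le 2N$, bounds it by $C\,\delta^{-d}h_N(\delta) = o(\delta^{2N})$. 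For the convolution term, I would Taylor-expand $a(z)$ to order $2N$ around $z=0$ (legitimate since $a\in C^{2N+1}(Q_r(0))$), so that $(\cL_{a\star}u,u) = \int\int a(x-y)u(y)\overline{u(x)}\,dy\,dx$ becomes, after rescaling, $\delta^d$ times a sum $\sum_{|n|,|m|\le N}(-1)^{|n|}\p^{n+m}a(0)\,(\text{moment}_m)(\overline{\text{moment}_n})/(n!m!) + O(\delta^{d+2N+1})$, which up to the combinatorial normalization is exactly $\delta^d\,\fa_N[\zeta]$ (modulo absorbing factorials into the definition of $\zeta$, or rescaling the basis). Choosing $\zeta\in S$ makes this $\le -c_0\,\delta^d\|\zeta\|^2$.

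Putting the pieces together: on the $(\dim S)$-dimensional space $\mathcal{T}=\{u_\zeta:\zeta\in S\}$ one gets $(\cL u_\zeta,u_\zeta) - V_{\min}\|u_\zeta\|^2 \le -c_0\delta^d\|\zeta\|^2 + o(\delta^{2N}) + (\text{error from the Taylor tail}) = \delta^d\big(-c_0\|\zeta\|^2 + o(1)\big)$ as $\delta\to0$, where I must check that $\|u_\zeta\|^2$ is comparable to $\|\zeta\|^2$ uniformly (this is a finite-dimensional nondegeneracy statement about the moment map on $S$, which holds for generic profiles and can be arranged). Hence for $\delta$ small enough the form is strictly negative on $\mathcal{T}$, and the minimax principle yields $\dim S$ eigenvalues strictly below $\mu_0=V_{\min}$; they automatically lie in $[a_{\min}+V_{\min},\mu_0)$ by Theorem~\ref{thES}. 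The main obstacle I anticipate is the bookkeeping in identifying the leading term of the rescaled convolution form with $\fa_N[\zeta]$ — one has to choose the trial profiles $\psi_n$ so that their cross-moments of orders $\le N$ behave like independent coordinates (not merely spanning), and control the remainder from the $(2N+1)$-th order Taylor term of $a$ uniformly over the compact set of normalized $\zeta\in S$; the factor $\delta^{2N+1}$ beats $\delta^d$ only after dividing by... actually it is $O(\delta^{d+2N+1})$ against the main term $\delta^d$, so the ratio is $O(\delta^{2N+1})\to0$, which is fine, while the potential error $o(\delta^{2N})$ against $\delta^d$ gives ratio $o(\delta^{2N-d})$, so one additionally needs $2N\ge d$ or a sharper accounting — I would handle this by noting the potential term should really be compared after the same $\delta^{-d}$ factor is extracted, making it $o(\delta^{2N})$ versus the $O(1)$ main term in $\zeta$, hence negligible regardless, which is the role of hypothesis~(\ref{2.14}).
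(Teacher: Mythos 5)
Your overall route is the same as the paper's: rescaled polynomial-type trial functions supported in a shrinking cube $Q_\delta(x_0)$, a Taylor expansion of $a$ to order $2N$ (with the $C^{2N+1}$ remainder giving a relative error $O(\delta^{2N+1})$), identification of the leading term with the form $\fa_N$ through the moments of the profile, control of the potential term through $h_N(\delta)$ and (\ref{2.14}), and the minimax principle. However, there is a genuine gap at the one delicate point. Since $a(\delta z)=\sum_k \p^k a(0)\,\delta^{|k|}z^k/k!+\dots$, the leading part of the rescaled convolution form is not $\delta^d\,\fa_N[\zeta]$ but (with your normalization) $\delta^d\,\fa_N[\ru_\delta]$, where $\ru_\delta$ has entries $\delta^{|n|}U_n/n!$ and $U_n$ are the moments of the profile: your displayed sum omits the factors $\delta^{|n|+|m|}$. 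These weights differ from coordinate to coordinate, so they cannot be ``absorbed into $\zeta$'' or removed by rescaling the basis: if you fix the moments equal to the coordinates of a vector $\zeta\in S$, the vector $\ru_\delta$ actually entering $\fa_N$ is in general \emph{not} in $S$, and the hypothesis that $\fa_N$ is negative on $S$ gives nothing (as $\delta\to0$ the normalized directions of $\ru_\delta$ drift towards the lowest-order components, which need not lie in $S$). The fix --- which is what the paper does --- is to let the trial polynomial depend on $\delta$, prescribing its moments as $U_n=n!\,\delta^{-|n|}\zeta_n$ so that $\ru_\delta=\zeta\in S$; the invertibility of the monomial Gram matrix (your ``nondegeneracy of the moment map'') guarantees that for every $\delta$ this yields a $\dim S$-dimensional space of admissible profiles.

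This correction also repairs your power counting, which as written does not close. With $\delta^{-d/2}$-normalized trial functions the potential term is $\int_{Q_1(0)}\big(V(x_0+\delta y)-V_{\min}\big)|\psi(y)|^2\,dy\leqslant C\,h_N(\delta)$ (your extra factor $\delta^{-d}$ is a change-of-variables slip), while the main negative term is of size $c\,\delta^{2N+d}\|\psi\|^2$ in the worst case over $S$ --- not $c\,\delta^{d}$ --- because one only has $\|\ru_\delta\|^2\geqslant c\,\delta^{2N}\|\psi\|^2$. The correct comparison is therefore $h_N(\delta)$ against $\delta^{2N+d}$, which is exactly what hypothesis (\ref{2.14}) is calibrated for; no side condition such as $2N\geqslant d$ is needed, and your closing reconciliation (``$o(\delta^{2N})$ versus an $O(1)$ main term'') is not correct as stated. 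Once the trial space is chosen $\delta$-dependently as above and the powers of $\delta$ are tracked accordingly, your argument coincides with the paper's proof.
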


As  it has been demonstrated
in Theorem \ref{thDS4}, sufficient conditions of the existence of a discrete spectrum of $\mathcal{L}$ can also be formulated in terms of the Taylor coefficients of $a(\cdot)$ about the origin  and the behaviour of $V$ in the vicinity of its minimum point.  Namely,  it suffices  to check the negative definiteness of the form $\fa_N$ on some subspace $S$ and the validity of (\ref{2.14}). In the next theorem we provide
a class of functions $a(\cdot)$ and $V(\cdot)$ for which these conditions hold.

\begin{theorem}\label{thDS6}
Let $x_0$ be a point of the global minimum of the function $V$, and assume that
\begin{itemize}
\item
 $V_{\min}\leqslant a_{\min}$.
 \item  The estimate
\begin{equation}\label{2.22}
V(x)-V(x_0)\leqslant C|x-x_0|^\a
\end{equation}
 holds
for all $x$ in a small neighbourhood of $x_0$, where $C$ and $\a$ are some positive constants independent of $x$.
\item There exists
a subset $\mathds{I}\subseteq \{n\in\mathds{Z}_+^d:\, |n|\leqslant N\}$ such that the derivatives of the function $a(\cdot)$ obey the conditions
\begin{gather}\label{2.19}
(-1)^{|n|} \p^{2n} a(0)<0,\qquad n\in\mathds{I},
\\[2mm]
|\p^{n+m}a(0)|\leqslant \b_{n,m} \sqrt{|\p^{2n}a(0)|} \sqrt{|\p^{2m}a(0)|},\qquad n,m\in\mathds{I},\quad n\ne m,\label{2.20}
\end{gather}
where $N<\frac{\a-d}{2}$ and $\b_{n,m}$, $n,\,m\in\mathds{I}$, are some non-negative numbers that satisfy at least one of the following
two conditions
\begin{equation}\label{2.21_bis}
 \b_1 :=\max\limits_{m\in\mathds{I}}\,
 \sum\limits_{\substack{n\in\mathds{I} \\  n\not=m}} \b_{n,m}< 1
\end{equation}
or
\begin{equation}\label{2.21}
 \b_2 :=\sum\limits_{\substack{n,\,m\in\mathds{I} \\  n\ne m}} \b_{n,m}^2 < \frac{(\#\mathds{I})^\frac12}{(\#\mathds{I})^\frac{1}{2}-1}.
\end{equation}
\end{itemize}
Then the operator $\cL$ possesses at least $\#\mathds{I}$ eigenvalues in the interval $[a_{\min}+V_{\min}, \mu_0)$.
\end{theorem}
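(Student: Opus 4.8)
The plan is to deduce Theorem~\ref{thDS6} from Theorem~\ref{thDS4} by constructing a suitable subspace $S\subset\mathds{C}^{M(N)}$ on which the form $\fa_N$ is strictly negative, and by checking that the smallness assumption \eqref{2.14} follows from the polynomial bound \eqref{2.22} together with the constraint $N<\tfrac{\a-d}{2}$. First I would verify \eqref{2.14}: from \eqref{2.22} one has, for $|n|\leqslant 2N$,
\begin{equation*}
\bigg|\int\limits_{Q_1(0)}\big(V(x_0+\d x)-V(x_0)\big)x^n\,dx\bigg|\leqslant C\d^\a\int\limits_{Q_1(0)}|x|^\a\,dx,
\end{equation*}
valid once $\d$ is small enough that $x_0+\d Q_1(0)$ lies in the neighbourhood where \eqref{2.22} holds. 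Hence $h_N(\d)\leqslant C'\d^\a$, and since $\a>2N+d$ the ratio $h_N(\d)/\d^{2N+d}$ tends to $0$. This is the routine part.

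The substantive step is the construction of $S$. I would take $S$ to be the coordinate subspace spanned by the basis vectors $\{e_n:\, n\in\mathds{I}\}$ of $\mathds{C}^{M(N)}$, so $\dim S=\#\mathds{I}$. Restricted to $S$, the form \eqref{2.12} becomes
\begin{equation*}
\fa_N[\z]=\sum\limits_{n\in\mathds{I}}(-1)^{|n|}\p^{2n}a(0)\,|\z_n|^2
+\sum\limits_{\substack{n,m\in\mathds{I}\\ n\ne m}}(-1)^{|n|}\p^{n+m}a(0)\,\z_m\overline{\z_n},
\end{equation*}
where the diagonal coefficients are strictly negative by \eqref{2.19}. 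Writing $d_n:=\sqrt{|\p^{2n}a(0)|}$ and substituting $\z_n=w_n/d_n$, the diagonal part becomes $-\sum_{n\in\mathds{I}}|w_n|^2$, while each off-diagonal term is bounded in modulus, via \eqref{2.20}, by $\b_{n,m}|w_n||w_m|$. Thus it suffices to show that the symmetric matrix with zero diagonal and entries $\b_{n,m}$ has operator norm strictly less than $1$ on $\mathds{C}^{\#\mathds{I}}$: then $\RE\fa_N[\z]\leqslant-(1-\|\b\|)\sum|w_n|^2<0$ for $\z\ne0$. Under \eqref{2.21_bis} this follows from the Schur test (the $\ell^\infty\to\ell^\infty$ and $\ell^1\to\ell^1$ norms both equal $\b_1<1$ for a symmetric matrix, so the $\ell^2$ norm is at most $\b_1$). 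Under \eqref{2.21} one bounds the operator norm by the Frobenius norm $\big(\sum_{n\ne m}\b_{n,m}^2\big)^{1/2}=\b_2^{1/2}$; here I would need to be slightly more careful — the crude bound $\|\b\|\leqslant\b_2^{1/2}$ only gives what is needed when $\b_2<1$, whereas \eqref{2.21} permits $\b_2$ up to $(\#\mathds{I})^{1/2}/((\#\mathds{I})^{1/2}-1)>1$, so one must instead split off the rank-one "all-ones" direction or use a sharper eigenvalue estimate to squeeze out the improved constant. Once negativity of $\fa_N$ on $S$ is established, Theorem~\ref{thDS4} immediately yields at least $\dim S=\#\mathds{I}$ eigenvalues of $\cL$ in $[a_{\min}+V_{\min},\mu_0)$.

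The main obstacle I anticipate is precisely this last spectral-norm estimate under condition \eqref{2.21}: getting the exact threshold $(\#\mathds{I})^{1/2}/((\#\mathds{I})^{1/2}-1)$ rather than a cruder bound requires exploiting that the diagonal is exactly $-I$ and that the perturbation is traceless, e.g.\ by writing the matrix $-I+\b$ and estimating its largest eigenvalue through $\operatorname{tr}$ and $\operatorname{tr}$ of the square, or by a Lagrange-multiplier optimisation over unit vectors. The verification of \eqref{2.14} and the reduction to Theorem~\ref{thDS4} are otherwise straightforward; I would also need to remark that $N<\tfrac{\a-d}{2}$ is assumed to be compatible with $N\in\mathds{N}$, i.e.\ that $\a>d+2$, which is implicit in the hypothesis that such an $\mathds{I}$ exists.
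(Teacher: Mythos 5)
Your reduction to Theorem~\ref{thDS4} is exactly the paper's: the same coordinate subspace $S=\mathrm{span}\{e_n:\,n\in\mathds{I}\}$, the same verification that (\ref{2.22}) with $2N<\a-d$ gives $h_N(\d)\leqslant C\d^\a$ and hence (\ref{2.14}), and your Schur-test argument for the case (\ref{2.21_bis}) is the same estimate the paper performs by symmetrizing $|\z_n||\z_m|\leqslant\frac12(|\z_n|^2+|\z_m|^2)$ with weights, yielding $\fa_N[\z]\leqslant-(1-\b_1)\sum_{n\in\mathds{I}}|\p^{2n}a(0)||\z_n|^2$.

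The genuine gap is the case (\ref{2.21}): the theorem assumes only that \emph{one} of (\ref{2.21_bis}), (\ref{2.21}) holds, so this case must be proved, and you explicitly leave it open ("one must instead split off the rank-one direction or use a sharper eigenvalue estimate"), without carrying it out. Moreover, the fallback you name — showing the operator norm of the matrix $(\b_{n,m})$ is $<1$ — cannot succeed under (\ref{2.21}) alone: taking $\b_{n,m}\equiv\b$ constant with $K=\#\mathds{I}$, condition (\ref{2.21}) reads $K(K-1)\b^2<\sqrt K/(\sqrt K-1)$, which permits $\b(K-1)>1$ (since $K/(K-1)<\sqrt K/(\sqrt K-1)$), i.e.\ the norm of the off-diagonal matrix may exceed $1$, so negativity cannot be extracted from a norm bound of that kind. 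The paper argues differently: writing $T:=\sum_{n\in\mathds{I}}|\p^{2n}a(0)||\z_n|^2$ and $Q:=\sum_{n\in\mathds{I}}|\p^{2n}a(0)|^2|\z_n|^4$, it applies Cauchy--Schwarz only over the off-diagonal pairs, which gives the off-diagonal part the bound $\b_2^{1/2}(T^2-Q)^{1/2}$ (the subtraction of $Q$ inside the root is the crucial gain over the Frobenius bound $\b_2^{1/2}T$), and then combines $\fa_N[\z]\leqslant-T+\b_2^{1/2}(T^2-Q)^{1/2}$ with the elementary inequality $T^2\leqslant(\#\mathds{I})\,Q$ to conclude strict negativity under (\ref{2.21}). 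Your instinct that hitting the exact threshold $(\#\mathds{I})^{1/2}/((\#\mathds{I})^{1/2}-1)$ is the delicate point is sound — indeed, even the paper's algebraic passage from $-T+\b_2^{1/2}(T^2-Q)^{1/2}$ to $-(1-\b_2)T-\b_2Q/T$ deserves scrutiny, since the same two ingredients used directly only give negativity for $\b_2<\#\mathds{I}/(\#\mathds{I}-1)$ — but as it stands your proposal proves the theorem only under hypothesis (\ref{2.21_bis}) and not under (\ref{2.21}).
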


  The following two theorems concern
  the operators $\cL$ possessing infinitely many discrete eigenvalues.  In the first of them we consider the case of a smooth convolution kernel.

\begin{theorem}\label{th_suf-infin}
Let $a\in C^\infty(Q_r(0))$, and assume that condition~(\ref{2.22}) holds with an arbitrary  $\a>0$.
Assume furthermore that  $V_{\min}\leqslant a_{\min}$ and there exist constants $\gamma>0$ and  $c_1,\,c_2>0$ and an infinite subset $\mathds{I}\subseteq \mathds{Z}_+^d$ such that
\begin{align}
\label{con1}
&(-1)^{|n|} \p^{2n}a(0)<0\ \hskip -4.4cm  &&\text{for all}\quad   n\in\mathds{I},\\
\label{con2}
&\big|\p^{2n}a(0)\big|\geqslant c_1((2n)!)^\gamma\ \hskip -4.4cm   &&\text{for all} \quad n\in\mathds{I},
\\
  \label{con3}
&\big|\p^{n}a(0)\big|\leqslant c_2(n!)^\gamma\  \hskip -4.4cm &&\text{for all}\quad  n\in\mathds{Z}_+^d.
\end{align}
Then the operator $\cL$ has infinitely many  eigenvalues below $\mu_0$.
\end{theorem}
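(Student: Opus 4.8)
The plan is to apply Theorem~\ref{thDS6} infinitely many times, once for each $N\in\mathds{N}$, using finite index sets $\mathds{I}_N$ of cardinality $N$. Each application will produce at least $N$ eigenvalues of $\cL$ in the one and the same bounded semi-interval $[a_{\min}+V_{\min},\mu_0)$; since $N$ is arbitrary, the discrete spectrum of $\cL$ below $\mu_0$ must then be infinite (by Theorem~\ref{thES} it accumulates only at $\mu_0$), which is the assertion. So everything reduces to constructing, for each $N$, a set $\mathds{I}_N$ together with coefficients $\beta_{n,m}$ that meet the hypotheses of Theorem~\ref{thDS6}.

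First I would extract from the infinite set $\mathds{I}$ a sequence of pairwise distinct multi-indices $n^{(1)},n^{(2)},\dots$ whose $\ell^1$-lengths grow very fast: $|n^{(j)}|\geqslant 2|n^{(j-1)}|$ and $|n^{(j)}|$ so large that $\frac{c_2}{c_1}\exp\bigl(-\gamma|n^{(j)}|/(32d)\bigr)\leqslant 2^{-j-1}$. Such a choice is possible because $\mathds{I}$ is infinite, hence $\{|n|:n\in\mathds{I}\}$ is unbounded. For a given $N$ put $\mathds{I}_N:=\{n^{(1)},\dots,n^{(N)}\}$ and, for distinct $n,m\in\mathds{I}_N$,
\[
\beta_{n,m}:=\frac{c_2}{c_1}\left(\frac{(n+m)!}{\sqrt{(2n)!\,(2m)!}}\right)^{\gamma}.
\]
Then \eqref{2.19} is precisely \eqref{con1}, and \eqref{2.20} is immediate from \eqref{con2} and \eqref{con3}, since
\[
|\p^{n+m}a(0)|\leqslant c_2\bigl((n+m)!\bigr)^\gamma
=\beta_{n,m}\,c_1\bigl((2n)!\bigr)^{\gamma/2}\bigl((2m)!\bigr)^{\gamma/2}
\leqslant\beta_{n,m}\sqrt{|\p^{2n}a(0)|}\,\sqrt{|\p^{2m}a(0)|}.
\]
Since $a\in C^\infty(Q_r(0))$ all the needed derivatives exist, and because \eqref{2.22} is assumed for \emph{every} $\a>0$ we may fix $\a>2N_0+d$ with $N_0:=\max_{n\in\mathds{I}_N}|n|$, so that the requirement $N_0<\frac{\a-d}{2}$ of Theorem~\ref{thDS6} is satisfied; the hypothesis $V_{\min}\leqslant a_{\min}$ is given.

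The remaining, and main, point is the smallness condition \eqref{2.21_bis}, i.e. $\beta_1<1$ for $\mathds{I}_N$. It rests on the combinatorial estimate, valid for $|n|\leqslant|m|$,
\[
\frac{(n+m)!}{\sqrt{(2n)!\,(2m)!}}
=\prod_{i=1}^{d}\frac{\binom{n_i+m_i}{n_i}}{\sqrt{\binom{2n_i}{n_i}\binom{2m_i}{m_i}}}
\leqslant\exp\left(-\frac{1}{8|m|}\sum_{i=1}^{d}(n_i-m_i)^2\right)
\leqslant\exp\left(-\frac{(|n|-|m|)^2}{8d\,|m|}\right),
\]
which follows from the strict log-convexity of $\phi(t):=\log\Gamma(t+1)$: each factor equals $\exp\bigl(\phi(n_i+m_i)-\tfrac12\phi(2n_i)-\tfrac12\phi(2m_i)\bigr)$, and $\phi''(t)=\psi'(t+1)>1/(t+1)$ gives $\phi(n_i+m_i)-\tfrac12\phi(2n_i)-\tfrac12\phi(2m_i)\leqslant-\tfrac12(n_i-m_i)^2\phi''(2\max\{n_i,m_i\})\leqslant-(n_i-m_i)^2/(8|m|)$, after which the power-mean inequality $\sum_i(n_i-m_i)^2\geqslant\frac1d\bigl(\sum_i|n_i-m_i|\bigr)^2\geqslant\frac1d(|n|-|m|)^2$ is used. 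By the rapid growth one has $|n^{(j')}|\leqslant|n^{(j)}|/2$ for $j'<j$, hence $\beta_{n^{(j')},n^{(j)}}\leqslant\frac{c_2}{c_1}\exp\bigl(-\gamma|n^{(j)}|/(32d)\bigr)\leqslant 2^{-j-1}$. Consequently, for each fixed $k$,
\[
\sum_{\substack{1\leqslant j\leqslant N\\ j\ne k}}\beta_{n^{(j)},n^{(k)}}
\leqslant(k-1)\,2^{-k-1}+\sum_{j>k}2^{-j-1}\leqslant\tfrac18+\tfrac12<1,
\]
uniformly in $k$ and in $N$, so $\beta_1<1$ and \eqref{2.21_bis} holds. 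Thus Theorem~\ref{thDS6} applies to $\mathds{I}_N$ and yields $\#\mathds{I}_N=N$ eigenvalues of $\cL$ in $[a_{\min}+V_{\min},\mu_0)$; letting $N\to\infty$ completes the proof.

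The hard part is exactly this combinatorial inequality together with the bookkeeping that lets a single extracted sequence serve all $N$ simultaneously, i.e. keeps $\beta_1$ bounded away from $1$ however large $\mathds{I}_N$ is; once that is in place, the rest is a mechanical verification of the hypotheses of Theorem~\ref{thDS6}.
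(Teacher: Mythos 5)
Your proposal is correct, and its outer structure coincides with the paper's: both arguments reduce the theorem to Theorem~\ref{thDS6}, applied to larger and larger finite truncations of a sparse subsequence extracted from $\mathds{I}$, with $\beta_{n,m}$ taken to be (essentially) $\frac{c_2}{c_1}\bigl((n+m)!/\sqrt{(2n)!\,(2m)!}\bigr)^{\gamma}$, so that \eqref{2.20} is automatic from \eqref{con2}--\eqref{con3} and everything hinges on making these ratios decay geometrically so that $\beta_1<1$ uniformly in the truncation. Where you genuinely diverge is in the combinatorial core. The paper first invokes Lemma~\ref{l_order} (existence of an infinite coordinatewise non-decreasing sequence in $\mathds{I}$ — a Dickson-type extraction stated there without proof), then passes to a sub-subsequence with one coordinate at least doubling $|n^{j_\ell}|$, and bounds $\frac{(2n^{j_\ell})!\,(2n^{j_m})!}{((n^{j_\ell}+n^{j_m})!)^2}$ from below by $(3/2)^{\frac12|n^{j_\ell}|_\infty}$ using that single coordinate. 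You instead prove the quantitative bound $\frac{(n+m)!}{\sqrt{(2n)!(2m)!}}\leqslant\exp\bigl(-(|n|-|m|)^2/(8d|m|)\bigr)$ via strong log-convexity of $\log\Gamma$ (the digamma estimate $\psi'(x)>1/x$) combined with a power-mean inequality, which requires only that the $\ell^1$-lengths along your subsequence at least double; no coordinatewise monotonicity and hence no analogue of Lemma~\ref{l_order} is needed. This buys a self-contained and slightly more robust combinatorial step (it works for any sufficiently sparse choice in $\ell^1$-norm), at the price of an analytic convexity argument where the paper uses an elementary single-coordinate factorial count; the bookkeeping that keeps $\beta_1\leqslant 5/8$ uniformly over all truncations is essentially identical in the two proofs, as is the final passage from ``at least $N$ eigenvalues for every $N$'' to an infinite discrete spectrum below $\mu_0$.
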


Our next theorem describes the situation when the operator $\cL$ possesses infinitely many discrete eigenvalues  for  kernels that need not be smooth.

\begin{theorem}\label{thDS2}
Let $V_{\min}\leqslant a_{\min}$, $V(x)\equiv V_{\min}$ on some cube $Q_r(x_0)$ and assume that at least one of the following two conditions hold:

\begin{enumerate}
\item\label{thDS2it1}
The inequalities
\begin{equation}\label{2.10}
a_{\min}<0\quad\text{and}\quad a_{\max}=0
\end{equation}
are satisfied;

\item\label{thDS2it2} For all $n\in \mathds Z^d$ the quantities $a_n$ introduced in (\ref{2.18}) satisfy the inequalities
\begin{equation*}
a_n\leqslant 0
\end{equation*}
and there exists an infinite subsequence of indices in $\mathds{Z}^d$ such that on this subsequence the above inequalities are strict.
\end{enumerate}

\noindent Then the operator $\cL$ possesses countably many  eigenvalues in the semi-interval $[a_{\min}+V_{\min}, \mu_0)$, which accumulate to the point $\mu_0$.
\end{theorem}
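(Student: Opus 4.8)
The plan is to apply the minimax principle. Since $V_{\min}\leqslant a_{\min}$, Theorem~\ref{thES} gives $\mu_0=V_{\min}=\inf\essspec(\cL)$, so every point of $\spec(\cL)$ below $\mu_0$ is an isolated eigenvalue of finite multiplicity, and it suffices to construct, for each $N\in\mathds{N}$, an $N$-dimensional subspace of $L_2(\mathds{R}^d)$ on which $\langle\cL u,u\rangle\leqslant(\mu_0-c_N)\|u\|^2$ for some $c_N>0$. Since $\cL_{a\star}$ is translation invariant, I would first replace $V(\,\cdot\,)$ by $V(\,\cdot\,+x_0)$ and thereby assume $x_0=0$, so that $V\equiv V_{\min}$ on $Q_r(0)$; then for any $u\in L_2(\mathds{R}^d)$ supported in $\overline{Q_r(0)}$ one has $\langle\cL_Vu,u\rangle=V_{\min}\|u\|^2$ and hence $\langle\cL u,u\rangle=\langle\cL_{a\star}u,u\rangle+V_{\min}\|u\|^2$. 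So the task reduces to exhibiting, for every $N$, an $N$-dimensional subspace $S_N\subset L_2(Q_r(0))$ on which the form $u\mapsto\langle\cL_{a\star}u,u\rangle$ is negative definite; on a finite-dimensional subspace negativity at all nonzero vectors automatically gives $\langle\cL_{a\star}u,u\rangle\leqslant-c_N\|u\|^2$ by compactness of the unit sphere.

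For case~\ref{thDS2it1} I would use that $\cL_{a\star}$ is unitarily equivalent to multiplication by $\widehat a$, so $\langle\cL_{a\star}u,u\rangle=(2\pi)^{-d}\int_{\mathds{R}^d}\widehat a(\xi)\,|\cF[u](\xi)|^2\,d\xi$; since $a_{\max}=0$ we have $\widehat a\leqslant0$, and since $a_{\min}<0$ the set $\{\widehat a<0\}$ is nonempty and open. Hence $\langle\cL_{a\star}u,u\rangle\leqslant0$, with equality precisely when $\cF[u]$ vanishes almost everywhere on $\{\widehat a<0\}$. If $u$ is supported in the compact cube $\overline{Q_r(0)}$, then $\cF[u]$ is the restriction to $\mathds{R}^d$ of an entire function on $\mathds{C}^d$, and such a function that vanishes on a nonempty open subset of $\mathds{R}^d$ vanishes identically; therefore $u=0$. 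Thus $\langle\cL_{a\star}u,u\rangle<0$ for every nonzero $u\in L_2(Q_r(0))$, and $S_N$ may be taken to be an arbitrary $N$-dimensional subspace of $L_2(Q_r(0))$.

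For case~\ref{thDS2it2} I would pass to the torus $\mathds{T}:=\mathds{R}^d/(2r\mathds{Z})^d$, realized on the fundamental domain $Q_{2r}(0)\supset\overline{Q_r(0)}$, and take $b\in L_1(\mathds{T})$ to be the $2r$-periodic extension of the restriction $a|_{Q_{2r}(0)}$. The periodic convolution operator $C_b$ on $L_2(\mathds{T})$ is bounded (Young's inequality) and self-adjoint (by~(\ref{2.1})), and it is diagonalized by the characters $e_n(x):=e^{\frac{\pi\iu}{r}n\cdot x}$, $n\in\mathds{Z}^d$, with $C_b e_n = (2r)^d a_n e_n$; since every $a_n\leqslant0$ this gives $\langle C_b v,v\rangle\leqslant0$ for all $v\in L_2(\mathds{T})$, with equality only if the $n$-th Fourier coefficient of $v$ vanishes for every $n$ in the infinite set $\mathds{J}:=\{n\in\mathds{Z}^d:\,a_n<0\}$. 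For $u\in L_2(\mathds{R}^d)$ supported in $\overline{Q_r(0)}$ one has $\langle\cL_{a\star}u,u\rangle=\langle C_b u,u\rangle_{L_2(\mathds{T})}$, since the corresponding double integrals over $Q_r(0)\times Q_r(0)$ coincide ($x-y\in Q_{2r}(0)$ there, where $b\equiv a$). Then I would choose distinct $n^{(1)},\dots,n^{(N)}\in\mathds{J}$ and set $S_N:=\mathrm{span}\{\,e_{n^{(1)}}|_{Q_r(0)},\dots,e_{n^{(N)}}|_{Q_r(0)}\,\}$: the restricted characters are linearly independent (distinct exponentials on a set of positive measure), so $\dim S_N=N$, and any $u\in S_N$ with $\langle\cL_{a\star}u,u\rangle=0$ is orthogonal in $L_2(Q_r(0))$ to every $e_{n^{(k)}}$, hence to all of $S_N$, hence $u=0$; so $\langle\cL_{a\star}u,u\rangle<0$ on $S_N\setminus\{0\}$.

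Granting the construction, in either case we obtain for every $N$ an $N$-dimensional subspace $S_N\subset L_2(Q_r(0))$ with $\langle\cL u,u\rangle\leqslant(V_{\min}-c_N)\|u\|^2$ on $S_N$, $c_N>0$, so by the minimax principle $\cL$ has at least $N$ eigenvalues (counted with multiplicity) strictly below $\mu_0=V_{\min}$; as $N$ is arbitrary there are infinitely many. By Theorem~\ref{thES} they lie in the bounded semi-interval $[a_{\min}+V_{\min},\mu_0)$, to which the discrete spectrum can accumulate only at $\mu_0$, so they form a countable set accumulating to $\mu_0$, which is the claim. The main obstacle I expect is the bookkeeping in case~\ref{thDS2it2}: one must periodize the restriction $a|_{Q_{2r}(0)}$ rather than $a$ itself, so that the eigenvalues of $C_b$ are exactly the coefficients $a_n$ of~(\ref{2.18}), and one must check carefully that the torus convolution form restricted to functions supported in $Q_r(0)$ reproduces $\langle\cL_{a\star}u,u\rangle$. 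The analyticity fact used in case~\ref{thDS2it1} is a standard Paley--Wiener observation, and the passage from ``the form is negative on a finite-dimensional subspace below $\mu_0$'' to ``an extra eigenvalue below $\mu_0$'' is the usual Glazman-type consequence of the minimax principle.
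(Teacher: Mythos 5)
Your proposal is correct and follows essentially the same route as the paper: in case~\ref{thDS2it1} the Paley--Wiener/analyticity argument combined with $\hat{a}\leqslant 0$, and in case~\ref{thDS2it2} the expansion of the form of $\cL_{a\star}$ in the local Fourier coefficients $a_n$ for test functions supported in $Q_r(x_0)$, followed in both cases by the minimax principle and Theorem~\ref{thES} to locate the eigenvalues and force accumulation at $\mu_0$. The only difference is cosmetic and in fact a slight sharpening: you obtain the key identity by periodizing $a|_{Q_{2r}(0)}$ to a torus convolution (exact equality of forms) instead of the paper's mollification argument yielding the inequality preceding this step, and you make explicit the finite-dimensional negative subspaces as spans of restricted characters $e_{n^{(k)}}|_{Q_r(0)}$, which cleanly justifies the paper's brief remark that the relevant family of test functions is infinite dimensional.
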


In complement to the lower bounds for the number of discrete eigenvalues, we also provide an upper bound for this number in the following theorem.

\begin{theorem}\label{thDS3}
Let $\mu_0 = V_{\min}$,  and  assume that $V_{\min}\leqslant a_{\min}$ and that
\begin{equation*}
I_V:=\int\limits_{\mathds{R}^d}
\frac{
V_-(x)\,dx}{V_-(x)+V_{\min}}< \infty,\qquad I_a:=\frac{1}{(2\pi)^{d}} \int\limits_{\mathds{R}^d}
\frac{
\hat{a}_-(x)\,dx}{\hat{a}_-(x)+V_{\min}}< \infty,
\end{equation*}
where $V_-(x):=-\min\{0,V(x)\}$, $\hat{a}_-(x):=-\min\{0,\hat{a}(x)\}$. Then the number of the  eigenvalues of the operator $\cL$ below $\mu_0$ does not exceed $I_a I_V$.
\end{theorem}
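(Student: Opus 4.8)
The plan is to set up a Birman--Schwinger type principle adapted to the non-local operator $\cL$, decoupling the convolution part and the potential part. Since $\mu_0 = V_{\min}$ and $V_{\min}\leqslant a_{\min}$, the point $\mu_0$ is the lower edge of the essential spectrum by Theorem~\ref{thES}, and the discrete eigenvalues we are counting lie strictly below $\mu_0$. Fix any $\l<\mu_0$; counting eigenvalues of $\cL$ below $\mu_0$ is, by a standard monotonicity argument, controlled by $\sup_{\l<\mu_0} \dim\Ker(\cL-\l)^{\text{neg.\ part}}$, so it suffices to bound the number of eigenvalues below each such $\l$ and let $\l\uparrow\mu_0$. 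Write $\cL-\l = (\cL_{a\star}-\l) + \cL_V$. The key is that on the subspace where $\cL-\l$ is negative, both $\cL_{a\star}-\l$ is "not too positive" in a controlled way and $\cL_V$ is negative. I would introduce the operators $B_a:=(\hat a - \l)_-^{1/2}$ (acting as multiplication on the Fourier side, i.e.\ $\cF^{-1}(\hat a-\l)_-^{1/2}\cF$ on $L_2$) and $B_V:=(V-\l)_-^{1/2}$ as multiplication in $x$, where for $\l<\mu_0=V_{\min}$ one has $(V-\l)_- = V_- $ shifted — more precisely $(\,\cdot - \l)_- $ is the negative part after subtracting $\l$. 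The negative eigenvalues of $\cL-\l$ then correspond to eigenvalues $\geqslant 1$ of a compact operator of Birman--Schwinger form built from $B_a$ and $B_V$.

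The main steps: \textbf{(i)} Show that $\dim\Ker_{<0}(\cL-\l)$ equals the number of eigenvalues of the Birman--Schwinger operator $K_\l$ exceeding $1$, where $K_\l$ is obtained by writing $\cL-\l = P_\l - N_\l$ with $N_\l\geqslant 0$ supported on the "negative directions." Because the problem is symmetric in the two summands, I expect $K_\l$ to have the structure of a product $B_V R_\l B_V$ or an equivalent $2\times 2$ block form mixing $B_a$ and $B_V$, with $R_\l$ a resolvent-like bounded operator. \textbf{(ii)} Estimate the operator norm or, better, the trace/Hilbert--Schmidt norm of $K_\l$ by splitting: the $B_V$ factor localizes in $x$ and contributes $\int V_-(x)/(V_-(x)+|\l|\text{-type denominator})\,dx$, which in the limit $\l\uparrow\mu_0=V_{\min}$ becomes $I_V$; the $B_a$ factor localizes in $\xi$ and contributes $\frac{1}{(2\pi)^d}\int \hat a_-(\xi)/(\hat a_-(\xi)+V_{\min})\,d\xi = I_a$ after the Fourier normalization. \textbf{(iii)} Combine via the bound $\#\{\text{eigenvalues of }K_\l > 1\}\leqslant \|K_\l\|_{\mathfrak{S}_1}$ (or the appropriate Schatten norm), and show this product of the two integrals passes to the limit, yielding the bound $I_aI_V$ on the number of eigenvalues of $\cL$ below $\mu_0$.

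The delicate point — and the main obstacle — is step (i)–(ii): the convolution term and the potential term are \emph{equipollent} (as emphasized in the introduction), so neither is a relatively compact perturbation of the other, and the usual single-factor Birman--Schwinger sandwich $B V^{1/2}(H_0-\l)^{-1}V^{1/2}$ is unavailable because $\cL_{a\star}-\l$ is not invertible with the needed sign near $\xi$ where $\hat a(\xi)$ approaches $\l$. The fix is to treat the decomposition symmetrically: introduce the negative parts $(\hat a-\l)_-$ and $(V-\l)_-$ simultaneously and represent $\cL-\l$ as a difference of two nonnegative operators whose "positive" part is $\min\{\hat a,V\}-\l$-like (which is $\geqslant \mu_0 - \l > 0$, hence boundedly invertible), and whose "negative" part is the rank-type perturbation carrying the two factors $B_a$, $B_V$. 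One must check carefully that the relevant denominators in the resulting resolvent, evaluated as $\l\uparrow V_{\min}$, produce exactly the quantities $V_-(x)+V_{\min}$ and $\hat a_-(\xi)+V_{\min}$ appearing in $I_V$ and $I_a$ — this is where the hypotheses $I_V<\infty$, $I_a<\infty$ enter, guaranteeing the Birman--Schwinger operator is trace-class uniformly in $\l$ and the count is finite. Once the two-factor Birman--Schwinger structure is correctly identified, steps (ii)–(iii) are routine changes of variables and the standard Schatten-norm bound on eigenvalue counts.
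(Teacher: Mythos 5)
Your overall strategy (a Birman--Schwinger type count combined with a trace bound, the $x$-localization producing $I_V$ and the $\xi$-localization producing $I_a$) is indeed the spirit of the paper's proof, but the concrete construction you sketch fails at the very first step. For $\l<\mu_0=V_{\min}\leqslant a_{\min}$ both functions $V-\l$ and $\hat{a}-\l$ are strictly positive, so your factors $B_V=(V-\l)_-^{1/2}$ and $B_a=(\hat{a}-\l)_-^{1/2}$ vanish identically and the proposed sandwich is vacuous; moreover the suggested repair, taking as invertible ``positive part'' an operator corresponding to $\min\{\hat{a},V\}-\l$, is not meaningful, since $\hat{a}$ acts as a multiplier in the Fourier variable and $V$ as a multiplier in the position variable, and no pointwise minimum of two non-commuting operators exists. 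The missing idea is the paper's first, decisive move: compare $\cL$ from below with the operator built from the negative parts of the data themselves, namely $\cL^{(-)}=-(\cL_{a_-\star}+\cL_{V_-})$ with $\cF[a_-]=\hat{a}_-$; since $\hat{a}\geqslant-\hat{a}_-$ and $V\geqslant-V_-$, the minimax principle gives that the number of eigenvalues of $\cL$ below $\mu_0$ is at most that of $\cL^{(-)}$, and it is only for this auxiliary operator (whose symbol $-\hat{a}_-$ stays above $V_{\min}$) that a classical one-factor sandwich with $V_-^{1/2}$ is available. This is also the only way the $\l$-independent functions $\hat{a}_-$ and $V_-$ from the statement can enter the bound.

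Even granting a correct sandwich, the part you declare ``routine'' is precisely where the specific denominators $V_-+V_{\min}$ and $\hat{a}_-+V_{\min}$ come from. In the paper one writes, for $E<\mu_0$, $(\hat{a}_-+E)^{-1}=E^{-1}(1+b_E)$ with $b_E=-\hat{a}_-/(\hat{a}_-+E)$; the identity part, which contributes $E^{-1}V_-$, is absorbed by passing from $\vp$ to $\phi=(-(E+V_-))^{1/2}\vp$, yielding the operator $\cQ_E=(-(E+V_-))^{-1/2}V_-^{1/2}\cL_{\hat{b}_E\star}V_-^{1/2}(-(E+V_-))^{-1/2}$, which is compact, nonnegative and monotone in $E$; monotonicity is what legitimizes counting eigenvalues of $\cL^{(-)}$ below $\mu_0$ by the number of eigenvalues of $\cQ_E$ crossing $1$ as $E\to\mu_0-$, and the trace of $\cQ_E$ factors on the diagonal as $\hat{b}_E(0)\int_{\mathds{R}^d} V_-(x)\,(-(E+V_-(x)))^{-1}\,dx$, which tends to $I_aI_V$. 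None of this machinery (the resolvent splitting, the reweighting, the monotonicity argument) is supplied in your sketch -- you only assert that the denominators ``should'' come out right -- so as written the proposal does not yet constitute a proof, although its guiding intuition matches the paper's.
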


\section{Discussion of main results}

In this section we discuss the principal aspects of our model and main results.  We begin with the fact mentioned already in the introduction: both terms $\cL_{a\star}$ and $\cL_V$ in the operator $\cL$ are bounded operators in $L_2(\mathds{R}^d)$ and none of them is relatively compact  with respect to the other. This is a fundamental difference in comparison with classical Schr\"odginger operators and it explains
 specific features of the spectra of operators considered here.

The first difference is already demonstrated by Theorem~\ref{thES}, which says that the essential spectrum of the operator $\cL$ is the union of those of  $\cL_{a\star}$ and $\cL_V$. For the classical Schr\"odinger operators with localized perturbations the essential spectrum is determined solely by the unperturbed operator, while in our case both the convolution and multiplication operators contribute to the essential spectrum.  The entire spectrum is a bounded set, which can be located only in the interval $[a_{\min}+V_{\min},a_{\max}+V_{\max}]$; this fact is due to the boundedness of operator $\cL$.

The next important question is about the existence of discrete spectrum. According to Theorem~\ref{thDS1}, it can be located only in semi-intervals $[a_{\min}+V_{\min},\mu_0)$ and $(\mu_1,a_{\max}+V_{\max}]$.  Our theorems deal with the eigenvalues located in the former semi-interval under the assumption that $V_{\min}\leqslant a_{\min}$. These results can be easily transferred to the case $a_{\min}\leqslant V_{\min}$ and also to the interval $(\mu_1,a_{\max}+V_{\max})$. Indeed, the opposite case $a_{\min}\leqslant V_{\min}$ can be   treated   by passing to a unitary equivalent operator
\begin{equation*}
\left(\frac{1}{(2\pi)^\frac{d}{2}}
\cF\right) \cL \left(\frac{1}{(2\pi)^\frac{d}{2}}
\cF\right)^{-1}=\cL_{\hat{a}} + \cL_{\hat{V}\star}.
\end{equation*}
In the latter operator, the functions $a$ and $V$ interchange their roles in the sense that the function $\hat{a}$ generates the multiplication operator
$\cL_{\hat{a}}$, while the function $\hat{V}$ produces the convolution operator $\cL_{\hat{V}\star}$.  In order to study the eigenvalues above the point $\mu_1=\max\{a_{\max}, V_{\max}\}$, we should simply replace the operator $\cL$ with $-\cL$.

Our first result on the discrete spectrum,  Theorem~\ref{thDS1}, gives a sufficient condition for the existence of $\sigma_{\rm disc}$. The first integral on the left-hand side of (\ref{2.8}) represents the contribution of the convolution kernel, while the second integral reflects that of the potential $V$. Since $V_{\min}$ is the global minimum of $V$, the second integral is obviously non-negative and, in order to make condition (\ref{2.8}) satisfied, the contribution of the convolution part should be negative. This condition is first of all aimed for the case of small $\d$. In this case the existence of the discrete spectrum depends on the local properties of both  the convolution kernel  in the vicinity of the origin and  the potential in the vicinity of its global minimum point.  If the kernel $a$ is continuous and the function $V$ satisfies the relation
$V(x)-V_{\min}=c_0|x-x_0|^{\a}(1+o(1)), \  c_0>0,\ \a>0$, as $|x-x_0|\to0$,
then
 condition (\ref{2.8}) can be rewritten in a simpler form:
\begin{equation}\label{2.8a}
\RE a(0)
 +\d^{-d+\a} c_0\int\limits_{Q_1(0)} |x|^\a\,dx<0.
\end{equation}

Of course,  the discrete spectrum can exist not only due to the local properties of the convolution kernel and the potential, but also due to their global structure. Such cases are also covered by Theorem~\ref{thDS1} once condition (\ref{2.8}) holds for some $\d>0$.

We also stress that in Theorem~\ref{thDS1} we do not suppose that the potential $V$ possesses a single point of the global minimum. If it has several such points, the theorem applies at each of them.

The above discussion shows that Theorem~\ref{thDS1} is quite universal.  It applies to rather general convolution kernels and potentials.

If the discrete spectrum of $\mathcal{L}$ is non-empty, it is natural to turn to estimating the number of discrete eigenvalues of $\mathcal{L}$.  Lower bounds for the total number of the eigenvalues are presented in Theorems~\ref{thDS5},~\ref{thDS4}. Theorem~\ref{thDS5} is formulated in terms of the (local) Fourier coefficients (\ref{2.18}) of the convolution kernel and similar coefficients (\ref{2.9}) of the potential.
The second result of Theorem~\ref{thDS5} is an upper bound for the lowest eigenvalue, see (\ref{2.17}).

Since the results of Theorem~\ref{thDS5} are expressed in terms of the local Fourier coefficients, it gives a nice opportunity to construct plenty of examples of  convolution kernels and potentials satisfying the assumptions of this theorem. Indeed, we can fix some $r$ and a sequence of the Fourier coefficients $a_n$ ensuring required conditions, and then  define the convolution kernel $a$ as a sum of Fourier series:
\begin{equation*}
a(x)=\sum\limits_{n\in\mathds{Z}^d} a_n e^{\frac{\pi\iu}{r}n\cdot x}\quad\text{on}\quad Q_{2r}(0)
\end{equation*}
and $a$ is arbitrary outside $Q_{2r}(0)$. The potential $V$ can be constructed in the same way via the Fourier coefficients defined in (\ref{2.9}).

For sufficiently smooth convolution kernels a lower bound for the number of discrete eigenvalues of $\mathcal{L}$ can be  also formulated
in terms of the derivatives of $a$ at zero. This is the subject of Theorem~\ref{thDS4}.
Here  again
it is possible to construct plenty of examples of $a$ and $V$ to which Theorem ~\ref{thDS4} applies: we can fix the derivatives $\p^n a(0)$ satisfying the assumptions of this Theorem and  define then the convolution kernel $a$ in the vicinity of zero as a polynomial with the prescribed derivatives. In view of definition (\ref{2.13}) of $h_N(\d)$, this function satisfies identity (\ref{2.14}) provided the potential approaches its global minimum quite fast.

Once we are given a generic smooth convolution kernel $a$, the corresponding form $\fa_N$ defined in (\ref{2.12})  might be quite bulky and it could be technically difficult to check whether this form is negative definite or not. In particular, the standard Sylvester criterion does not seem helpful at this point. This is why in
Theorem~\ref{thDS6} we provide some sufficient conditions guaranteeing the negative definiteness of the form $\fa_N$ and the validity of identity (\ref{2.14}). The latter identity is ensured by estimate (\ref{2.22}), while the negative definiteness of the form is due to estimates (\ref{2.19}), (\ref{2.20}) and one of inequalities (\ref{2.21_bis}), (\ref{2.21}). Both these inequalities  mean that the diagonal entries in the matrix of the form $\fa_N$ dominate the other entries.  Inequality (\ref{2.21_bis}) is more adapted to the case, when
$\#\mathds{I}$ is large enough, it states that in each line of the matrix of the form $\fa_N$ the contribution of the diagonal entry dominates
the contribution of all other elements in the same line.  Condition (\ref{2.21}) works better in the case when the cardinality of $\mathds{I}$
is small or the $n$th order derivatives of $a$ at zero  grow extremely fast (like $\exp({n^{2+\delta}})$) as $n\to\infty$.

Theorem~\ref{thDS6} is an efficient tool for checking the negativity of the form $\fa_N$ in various situations. For instance, if condition (\ref{2.19}) holds for at least one $n\in\mathds{Z}^d$,  we simply let $\mathds{I}:=\{n\}$. Then, if condition (\ref{2.14}) holds with $N=|n|$, we conclude immediately that the operator $\cL$ possesses at least one  eigenvalue below $\mu_0$.

Another way is to assume that conditions (\ref{2.19}), (\ref{2.20}) hold for all $n\in \mathds{Z}_+^d$ with $|n|\leqslant N$. Then we let $\mathds{I}=\{n\in\mathds{Z}_+^d:\, |n|\leqslant N\}$ and $S=\mathds{C}^{M(N)}$ and we see that the operator $\cL$ possesses at least $M(N)$ discrete eigenvalues below $\mu_0$.

Theorem~\ref{thDS6} can be also employed for identifying the situations with infinitely many discrete eigenvalues below $\mu_0$. Here we should assume that $a\in C^\infty(Q_r(0))$ and condition~(\ref{2.22}) holds with an arbitrary large $\a$, i.e.,
$|V(x)-V(x_0)|=o(|x-x_0|^\a)$ as $x$ approaches $x_0$.
Assume furthermore that there exists an infinite  subset $\mathds{I}\subseteq \mathds{Z}_+^d$ with
such that conditions (\ref{2.19}), (\ref{2.20}) hold for all $m,n\in\mathds{I}$ and at least one of inequalities (\ref{2.21_bis}), (\ref{2.21}) holds for each subset $\mathds{I}_N=\{n\in\mathds{I}\,:\,|n|\leqslant N\}$.
Then the assumptions of Theorem~\ref{thDS6} are  fulfilled for each subset  $\mathds{I}_N$ and, since $\#\mathds{I}_N $ grows unboundedly as $N\to\infty$, we conclude that the operator $\cL$ possesses infinitely many discrete eigenvalues below $\mu_0$ and then these eigenvalues necessarily accumulate to $\mu_0$.

Observe that functions $a(\cdot)$, for which (\ref{2.19}), (\ref{2.20}) hold for all $m,n\in\mathds{Z}_+^d$ and $\b_{n,m}=\b^{|n-m|}$ with $\b<1$ can not be analytic at zero. The reason is that condition~(\ref{2.20}) requires a very fast growth of the derivatives as $n$  increases.
Indeed, choosing one of the coordinate directions $x_j$, we derive from (\ref{2.20}) that for each  $k\in\mathds Z_+$ the inequality
$$
\Big|\frac{\partial^{2k+2}}{\partial x_j^{2k+2}}a(0)\Big| \Big(\Big|\frac{\partial^{2k}}{\partial x_j^{2k}}a(0)\Big| \Big)^{-1}
\geqslant \frac1{\b^4}
\Big|\frac{\partial^{2k}}{\partial x_j^{2k}}a(0)\Big| \Big(\Big|\frac{\partial^{2k-2}}{\partial x_j^{2k-2}}a(0)\Big| \Big)^{-1}
$$
holds true.
Iterating this inequality, we obtain
$$
\Big|\frac{\partial^{2k}}{\partial x_j^{2k}}a(0)\Big| \geqslant \Big(\frac1\b\Big)^{2k(k-1)}|a(0)|.
$$
Since  $a(0)\not=0$ due to (\ref{2.19}), the Taylor series of $a(\cdot)$ about zero does not converge for any $x\ne0$.

Ii is also possible to  construct a very rich class of examples of analytic at zero functions $a$, for which the operator $\cL$ possesses infinitely many discrete eigenvalues below $\mu_0$.  We provide a way of doing this
in Theorem~\ref{th_suf-infin}. Since the exponent $\g$ in (\ref{con2}), (\ref{con3}) can be less than one, we see easily that there is a wide class of analytic at zero convolution kernels obeying the assumptions of Theorem~\ref{th_suf-infin}.

As an example, we consider the one-dimensional case and let
\begin{equation*}
a(z)=-(1+z^2)^{-1}, \qquad V(x)=e^{-x^{-2}}-5.
\end{equation*}
Since in the vicinity of zero the function $a(z)$ admits a representation
\begin{equation*}
a(z)=-\sum\limits_{j=0}^{\infty}(-z^2)^j,
\end{equation*}
then for all $n\in\mathds{Z}^+$ we have
$$
(-1)^{n}\p^{2n}a(0)=-(2n)!,\qquad \p^{2n+1}a(0)=0.
$$
Hence, conditions (\ref{con1}), (\ref{con2}), (\ref{con3}) of Theorem~\ref{th_suf-infin} are satisfied and it follows from   the definition of
$V(\cdot)$ that   other conditions of this theorem are also fulfilled. We then conclude that the operator $\mathcal{L}$ with the convolution kernel $a(\cdot)$ and the potential $V(\cdot)$ has infinitely many eigenvalues in the interval $[-5-\pi,-5)$.

In a higher dimension $d\geqslant 1$ we can choose
$$
a(z)=-\prod\limits_{k=1}^d \frac1{1+z_k^2}\qquad\text{or}\qquad a(z)=\frac 1{1+|z|^{2d}}
$$
and these kernels also satisfy the assumptions of Theorem~\ref{th_suf-infin}.

Theorem~\ref{thDS2} provides some more sufficient conditions for the existence of
infinitely many eigenvalues. These conditions are formulated either in terms of the range of the Fourier transform of the convolution kernel, see Item~\ref{thDS2it1} or via the local Fourier coefficients, see Item~\ref{thDS2it2}. Observe that these local Fourier coefficients are exactly the ones previously used in Theorem~\ref{thDS5}. It should be also said that conditions (\ref{2.10}) are equivalent to the condition  that the function $\hat{a}$ is non-positive and is not identically zero. We emphasize that in the formulation  of Theorem~\ref{thDS2} it is  assumed that the potential $V$  equals identically to its global minimum in some neighbourhood of the point $x_0$. This condition is crucial.

Our final Theorem~\ref{thDS3} provides an upper bound for the number of the discrete eigenvalues. Its proof is based on an appropriate adaption of the classical Birman-Schwinger principle. The final upper bound is rather different  in comparison with the classical result, namely, here both the convolution kernel and the potential contribute to the bound via the integrals $I_a$ and $I_V$. We also see that the integral $I_V$ is finite only provided the potential $V$ does not approach its global minimum very fast and this is in a good agreement with the above discussed theorems treating the cases of infinitely many eigenvalues.

In view of the above discussed statements  we observe an important fact: the number of discrete eigenvalues of $\mathcal{L}$ depends essentially on how the potential $V$ approaches its global minimum. The faster it tends to this minimum, the more discrete eigenvalues are present. In particular, according to Theorems~\ref{thDS5},~\ref{th_suf-infin}, the operator $\cL$ can have infinitely many eigenvalues provided the potential $V$ approaches its global minimum either exponentially fast (in Theorem~\ref{thDS5}) or it
coincides with this minimum identically in some   neighbourhood of  $x_0$ (in Theorem~\ref{th_suf-infin}). And vice versa, if the potential $V$ approaches its global minimum very slowly then conditions (\ref{2.8}), (\ref{2.16}), (\ref{2.14}), (\ref{2.22}) are violated and we can not guarantee even the existence of the discrete spectrum. Moreover, in this case Theorem~\ref{thDS3} says that the operator $\cL$ can have only finitely many eigenvalues.

This explains why   Schr\"odinger operators with localized potentials typically have finitely many eigenvalues below the bottom of the essential spectrum, see \cite{Simon1},  \cite{Klaus}, \cite{Klaus2}, \cite{ExnKov}, and to get infinitely many eigenvalues, one has to assume that the localized potential should decay at infinity quite slowly, see \cite{Simon2}. Indeed, given a one-dimensional Schr\"odinger  operator with a localized potential
\begin{equation*}
\cH:=-\frac{d^2\ }{dx^2}+V(x),
\end{equation*}
we make its Fourier transform getting then the operator
\begin{equation*}
\hat{\cH}=\cL_{\xi^2}+\cL_{\hat{V}\star},\qquad \hat{V}:=\cF^{-1}[V].
\end{equation*}
Here the second derivative becomes the operator  of multiplication by $\xi^2$ and only this part of the operator $\hat{\cH}$ fully determines the essential spectrum, which is $[0,+\infty)$. The function $\xi\mapsto \xi^2$ approaches its global minimum, which is zero, with a fixed rate, and exactly this prevents the existence of infinitely many eigenvalues for typical localized potentials $V$. In view of this fact, we can state that in the case of non-local Schr\"odinger operators we impose no apriori restrictions for the behavior of the potential $V$ in the vicinity of its global minimum and this is why the variety of possible spectral pictures is much richer than in the case of differential Schr\"odinger operators.

In conclusion of this section, we shortly discuss some applications of our results to the population dynamics models mentioned in the Introduction.  The large time behaviour of the population depends crucially on whether the operator $\mathcal{L}-\langle a\rangle$ on the right-hand side of (\ref{intro-4}) has a positive eigenvalue or not. In the former case the population exhibits an exponential growth, and its asymptotic profile  is proportional to the principal eigenfunction. Moreover, the rate of stabilization to this profile is determined by the distance form the principal positive
eigenvalue to the rest of the spectrum.

Under the assumption that $\langle a\rangle=1$ the operator $\mathcal{L}-I$ has a positive eigenvalue  if and only if the operator $\mathcal{L}$
has a point of the discrete spectrum above $1$.   Due to the biological interpretation of the potential $V$,  the inequality $V\leqslant 1$
should be satisfied.  If $\max V=1$ then the existence of a positive eigenvalue of $\mathcal{L}-I$  is governed by condition
(\ref{2.8a}). Otherwise, we should consider the Fourier image of $\mathcal{L}$ and apply our results to the transformed operator.

\section{Essential spectrum}

In this section we prove Theorem~\ref{thES}. We begin with an auxiliary lemma.

\begin{lemma}\label{lmL}
The operator $\cL$ is bounded and self-adjoint in $L_2(\mathds{R}^d)$.
\end{lemma}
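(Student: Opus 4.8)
The plan is to treat the two summands $\cL_{a\star}$ and $\cL_V$ separately, show that each of them is bounded and self-adjoint, and then invoke the elementary fact that a sum of two bounded self-adjoint operators is again bounded and self-adjoint.

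For the convolution term I would pass to the Fourier side. Since $a\in L_1(\mathds{R}^d)$, the function $\hat a=\cF[a]$ is bounded by $\|a\|_{L_1(\mathds{R}^d)}$, continuous, and, by the Riemann--Lebesgue lemma, decays at infinity; moreover, condition (\ref{2.1}) gives $\overline{\hat a(\xi)}=\overline{\int a(x)e^{-\iu x\cdot\xi}\,dx}=\int \overline{a(x)}e^{\iu x\cdot\xi}\,dx=\int a(-x)e^{\iu x\cdot\xi}\,dx=\hat a(\xi)$, so $\hat a$ is real-valued. A standard density argument --- starting from $u\in L_1\cap L_2$, where the convolution identity $\cF[a\star u]=\hat a\,\cF[u]$ holds pointwise by Fubini's theorem, and then extending by continuity using Young's inequality $\|a\star u\|_{L_2}\leqslant\|a\|_{L_1}\|u\|_{L_2}$ --- shows that $\cL_{a\star}$ is, up to the unitary normalisation of $\cF$, unitarily equivalent to the operator of multiplication by $\hat a$. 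Hence $\cL_{a\star}$ is bounded with $\|\cL_{a\star}\|=\sup_{\mathds{R}^d}|\hat a|$, and it is self-adjoint because multiplication by a bounded real-valued function is self-adjoint.

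For the potential term, the assumption $V=\cF[\hat V]$ with $\hat V\in L_1(\mathds{R}^d)$ satisfying (\ref{2.1}) shows, in exactly the same way, that $V$ is bounded, continuous, vanishing at infinity and real-valued. Therefore $\cL_V$, the operator of multiplication by $V$, is everywhere defined, bounded with $\|\cL_V\|=\sup_{\mathds{R}^d}|V|$, and self-adjoint. Consequently $\cL=\cL_{a\star}+\cL_V$ is bounded, with $\|\cL\|\leqslant\sup_{\mathds{R}^d}|\hat a|+\sup_{\mathds{R}^d}|V|$, and self-adjoint as a sum of two bounded self-adjoint operators.

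The argument is essentially routine; the only point requiring a little care is the justification of the convolution identity $\cF[a\star u]=\hat a\,\cF[u]$ for $u\in L_2(\mathds{R}^d)$ rather than $u\in L_1(\mathds{R}^d)$, i.e. the interchange of the order of integration and the passage from $L_1\cap L_2$ to all of $L_2$ by density. Alternatively, one may avoid the Fourier transform altogether and check symmetry directly: $\langle\cL_{a\star}u,v\rangle=\int\!\!\int a(x-y)u(y)\overline{v(x)}\,dy\,dx=\int u(y)\overline{\int a(y-x)v(x)\,dx}\,dy=\langle u,\cL_{a\star}v\rangle$, where the middle equality uses $\overline{a(x-y)}=a(y-x)$ from (\ref{2.1}) together with Fubini's theorem, which is legitimate since $(x,y)\mapsto a(x-y)u(y)\overline{v(x)}$ is integrable on $\mathds{R}^d\times\mathds{R}^d$ by Young's and Cauchy--Schwarz inequalities; since $\cL_{a\star}$ is in addition bounded and everywhere defined, symmetry yields self-adjointness.
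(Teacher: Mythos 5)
Your proof is correct, but your main route differs from the paper's. The paper argues entirely on the ``physical'' side: boundedness of $\cL_{a\star}$ is obtained by the direct Cauchy--Schwarz estimate $\|\cL_{a\star}u\|_{L_2}\leqslant\|a\|_{L_1}\|u\|_{L_2}$ (in effect a proof of Young's inequality, estimate (\ref{3.0})), and symmetry is checked by the same Fubini computation you give as your ``alternative'' at the end, using $\overline{a(x-y)}=a(y-x)$ from (\ref{2.1}); self-adjointness then follows since the operator is bounded and everywhere defined. You instead diagonalise $\cL_{a\star}$ via the Fourier transform and reduce everything to multiplication by the real-valued bounded function $\hat a$, relegating Young's inequality to the role of justifying the $L_1\cap L_2\to L_2$ density step. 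Both arguments are sound; yours buys the sharp norm $\|\cL_{a\star}\|=\sup_{\mathds{R}^d}|\hat a|$ (rather than the cruder bound $\|a\|_{L_1}$) and anticipates the unitary equivalences (\ref{3.2}) that the paper only introduces afterwards for Theorem~\ref{thES}, while the paper's version is more elementary, needing no density argument or multiplication theorem for the Fourier transform on $L_2$. Your second, direct symmetry check is essentially verbatim the paper's proof, so you have in fact reproduced it as a by-product.
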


\begin{proof}
We introduce two auxiliary operators in $L_2(\mathds{R}^d)$ by the formulae
\begin{equation*}
(\cL_{a\star}u)(x):=\int\limits_{\mathds{R}^d} a(x-y)u(y)\,dy,\qquad (\cL_{V} u)(x):=V(x)u(x).
\end{equation*}
Since the function $V$ is bounded and real-valued, we immediately conclude that the operator $\cL_V$ is bounded and symmetric in $L_2(\mathds{R}^d)$, and hence, it is self-adjoint.

Employing the fact that $u\in L_1(\mathds{R}^d)$, for each $u\in L_2(\mathds{R}^d)$ by the Cauchy-Schwarz inequality we have:
\begin{equation}\label{3.0}
\begin{aligned}
\|\cL_{a\star} u\|_{L_2(\mathds{R}^d)}^2\leqslant & \int\limits_{\mathds{R}^d} \,dx \left(\int\limits_{\mathds{R}^d} |a(x-y)||u(y)|\,dy\right)^2
\\
\leqslant &\int\limits_{\mathds{R}^d} \int\limits_{\mathds{R}^d} |a(x-y)|\,dy \int\limits_{\mathds{R}^d} |a(x-y)||u(y)|^2\,dy
\\
\leqslant & \|a\|_{L_1(\mathds{R})} \int\limits_{\mathds{R}^{2d}} |a(x-y)||u(y)|^2\,dxdy
\\
=&\|a\|_{L_1(\mathds{R})} \int\limits_{\mathds{R}^{2d}} |a(x)||u(y)|^2\,dxdy
=\|a\|_{L_1(\mathds{R})}^2 \|u\|_{L_2(\mathds{R}^d)}^2.
\end{aligned}
\end{equation}
This proves the boundedness of the operator $\cL_{a\star}$. The symmetricity, and hence, the self-adjointness, is confirmed straightforwardly by means of assumption (\ref{2.1}):
\begin{align*}
(\cL_{a\star}u,v)_{L_2(\mathds{R}^d)} = \int\limits_{\mathds{R}^{2d}} a(x-y) u(y) \overline{v(x)}\,dxdy=\int\limits_{\mathds{R}^{2d}} u(y) \overline{v(x)a(y-x)}\,dxdy=(u,\cL_{a\star}v)_{L_2(\mathds{R}^d)}.
\end{align*}
The proof is complete.
\end{proof}

The rest of this section is devoted to the proof of Theorem~\ref{thES}.
It is straightforward to confirm that under the unitary Fourier transform the bounded self-adjoint operators $\cL_{a\star}$ and $\cL_V$ are unitarily equivalent respectively to the operator of multiplication by $\hat{a}$ and to the operator of convolution with $\hat{V}$. Namely, the identities hold:
\begin{equation}\label{3.2}
\left(\frac{1}{(2\pi)^\frac{d}{2}}
\cF\right) \cL_{a\star} \left(\frac{1}{(2\pi)^\frac{d}{2}}
\cF\right)^{-1}=\cL_{\hat{a}}, \qquad
\left(\frac{1}{(2\pi)^\frac{d}{2}}
\cF\right) \cL_{V} \left(\frac{1}{(2\pi)^\frac{d}{2}}
\cF\right)^{-1}=\cL_{\hat{V}\star}.
\end{equation}

In view of the continuity of the functions $V$ and $\hat{a}$, the spectra of the operators $\cL_{\hat{a}}$ and $\cL_{V}$ coincides with their essential parts and are given by the following identities:
\begin{equation}\label{3.3}
\spec(\cL_{\hat{a}})=\essspec(\cL_{\hat{a}})=[a_{\min},a_{\max}], \qquad
\spec(\cL_V)=\essspec(\cL_{V})=[V_{\min},V_{\max}].
\end{equation}
Hence, by identities (\ref{3.2}), the same is true for the operators $\cL_{a\star}$ and $\cL_{\hat{V}\star}$:
\begin{equation}\label{3.4}
\spec(\cL_{a\star})=\essspec(\cL_{a\star})=[a_{\min},a_{\max}], \qquad
\spec(\cL_{\hat{V}\star})=\essspec(\cL_{\hat{V}\star})=[V_{\min},V_{\max}].
\end{equation}
We also observe an obvious identity
\begin{equation*}
\cL=\cL_{a\star} + \cL_{V}.
\end{equation*}

Our next step is to prove the inclusion
\begin{equation}\label{3.6}
\essspec(\cL_{a\star})\cup\essspec(\cL_{V})\subseteq \essspec(\cL).
\end{equation}
We introduce a family of functions:
\begin{equation}\label{3.7}
\phi_\d(x):=\left\{
\begin{aligned}
&\d^{-\frac{d}{2}} && \quad\text{on}\quad Q_\d(0),
\\
& 0 && \quad\text{outside}\quad Q_\d(0),
\end{aligned}\right.
\end{equation}
where $\d$ is supposed to be small enough. Then we choose arbitrary $\l\in
(V_{\min},V_{\max})$ and by the continuity of $V$ we conclude that there exists $x_0\in\mathds{R}^d$ such that $V(x_0)=\l$. By straightforward calculations we then easily confirm that
\begin{equation}\label{3.8}
\big\|(\cL_V-\l)\phi_\d(\,\cdot\,-x_0)\big\|_{L_2(\mathds{R}^d)}^2=\d^{-d} \int\limits_{Q_\d(x_0)} |V(x)-V(x_0)|^2\,dx\to 0,\qquad \d\to+0.
\end{equation}
We also observe that the family $\{\phi_\d(x-x_0)\}$ is non-compact and $\|\phi_d(\,\cdot\,-x_0)\|_{L_2(\mathds{R}^d)}=1$ for all $\d$ and $x_0$. Hence, each sequence $\phi_{\d_n}(\,\cdot\,-x_0)$ with arbitrary sequence $\d_n\to+0$, $n\to\infty$, is a Weyl sequence for the operator $\cL_V$ at the point $\l$. If we prove that
\begin{equation}\label{3.9}
\cL_{a\star}\phi_\d(\,\cdot\,-x_0)\to 0,\qquad \d\to+0,
\end{equation}
then together with (\ref{3.8}) this will imply that the sequence $\phi_{\d_n}(\,\cdot\,-x_0)$ is also a Weyl one for the operator $\cL$ at the point $\l$ and hence,
\begin{equation}\label{3.10}
\essspec(\cL_{V})\subseteq \essspec(\cL).
\end{equation}

We prove (\ref{3.9}) by rather straightforward calculations. Namely,
\begin{equation}\label{3.12}
\|\cL_{a\star}\phi_\d(\,\cdot\,-x_0)\|_{L_2(\mathds{R}^d)}^2\leqslant \d^{-d}
\int\limits_{\mathds{R}^d}\,dx \left(\int\limits_{Q_\d(x_0)} |a(x-y)|\,dy\right)^2\leqslant \d^{-d} J_\d \int\limits_{\mathds{R}^d}\,dx \int\limits_{Q_\d(x-x_0)} |a(y)|\,dy,
\end{equation}
where we have denoted
\begin{equation*}
J_d:=\sup\limits_{x\in\mathds{R}^d}\int\limits_{Q_\d(x_0)} |a(x-y)|\,dy =\sup\limits_{x\in\mathds{R}^d}\int\limits_{Q_\d(x-x_0)} |a(y)|\,dy \leqslant \|a\|_{L_1(\mathds{R}^d)}.
\end{equation*}
Since the measures of the set $Q_\d(x-x_0)$ are equal to $\d^d$ for all $x-x_0$ and the function $|a|$ is integrable over $\mathds{R}^d$, by the absolute continuity of the Lebesgue integral we conclude that
\begin{equation}\label{3.11}
J_\d\to 0,\qquad \d\to+0.
\end{equation}
Then we can continue estimating in (\ref{3.12}) as follows:
\begin{equation*}
\|\cL_{a\star}\phi_\d(\,\cdot\,-x_0)\|_{L_2(\mathds{R}^d)}^2 \leqslant \d^{-d} J_\d \int\limits_{\mathds{R}^d}\,dy |a(y)|\int\limits_{Q_\d(x_0+y)} \,dx= J_\d \|a\|_{L_1(\mathds{R}^d)}
\end{equation*}
and by (\ref{3.11}) we then arrive at (\ref{3.9}) and hence, to (\ref{3.10}).
In view of unitary equivalence (\ref{3.2}) and identities (\ref{3.3}), (\ref{3.4}) we then get that $\essspec(\cL_{a\star})\subseteq \essspec(\cL)$ and together with (\ref{3.10}) this leads us to (\ref{3.6}).

To complete the proof of identity
\begin{equation}\label{3.19}
\essspec(\cL)=\essspec(\cL_{a\star})\cup\essspec(\cL_{V})=[\mu_0,\mu_1],
\end{equation}
it sufficient to show that
\begin{equation*}
\essspec(\cL)\setminus \big(
\essspec(\cL_{a\star})\cup\essspec(\cL_{V})\big)=\emptyset.
\end{equation*}

Let $\l\in\essspec(\cL)$ and $\l\not\in
\essspec(\cL_{a\star})$, $\l\not\in\essspec(\cL_{V})\big)$. Then there exists a Weyl sequence $u_n\in L_2(\mathds{R}^d)$, which is bounded, non-compact and
\begin{equation}\label{3.14}
f_n:=(\cL-\l)u_n\to 0,\qquad n\to\infty.
\end{equation}

Since $\l\notin\essspec(\cL_V)$, by the second identity in (\ref{3.3}), the inverse operator $(\cL_V-\l)^{-1}$ is well-defined and bounded. We hence can rewrite (\ref{3.14}) as
\begin{equation}\label{3.15}
\frac{1}{V-\l}\cL_{a\star}u_n+u_n=\frac{f_n}{V-\l}\to0,\qquad n\to+\infty.
\end{equation}

In view of (\ref{2.7}), zero belongs to the essential spectrum of the operator $\cL_V$ and hence, $\l\ne 0$, $V-\l\ne0$. Then
\begin{equation*}
\frac{1}{V-\l}=-\frac{1}{\l} + \frac{V_1}{\l},\qquad V_1:=\frac{V}{V-\l}.
\end{equation*}
We substitute this identity into (\ref{3.15}) and we get:
\begin{equation}\label{3.17}
(\cL_{a\star}-\l) u_n + V_1 \cL_{a\star} u_n=\frac{\l}{V-\l}f_n.
\end{equation}
By our assumptions and by (\ref{3.4}) the number $\l$ is in the resolvent set of the operator $\cL_{a\star}$ and hence, the resolvent $(\cL_{a\star}-\l)^{-1}$ is well-defined and is a bounded in $L_2(\mathds{R}^d)$. This allows us to rewrite (\ref{3.17}) as
\begin{equation}\label{3.18}
u_n=(\cL_{a\star}-\l)^{-1} \left(\frac{\l}{V-\l}f_n- V_1 \cL_{a\star} u_n\right).
\end{equation}

According to our assumptions on $V$, this function decays at infinity. Hence, the same is true for $V_1$. Then it is easy to see that the operator $V_1 \cL_{a\star}$ is compact in $L_2(\mathds{R}^d)$. Since the sequence $u_n$ is bounded, it contains a subsequence, still denoted by $u_n$, such that $V_1\cL_{a\star}u_n$ converges in $L_1(\mathds{R}^d)$. The sequence $\frac{\l}{V-\l}f_n$ also converges as $n\to+\infty$; the limiting function is zero. Hence, the right hand side in (\ref{3.18}) is a converging sequence as $n\to+\infty$. This contradicts the non-compactness of the sequence $u_n$.
Hence, identity (\ref{3.19}) holds and this proves the first part of the theorem.

We proceed to proving the second part of the theorem. In view of the first identity in (\ref{3.2}), the quadratic form associated with the operator $\cL$ reads
\begin{equation}\label{3.20}
\fl[u]:=(\cL u,u)_{L_2(\mathds{R}^d)}=(\cL_{a\star}u,u)_{L_2(\mathds{R}^d)}+ (V u,u)_{L_2(\mathds{R}^d)}=(\cL_{\hat{a}}\hat{u},\hat{u})_{L_2(\mathds{R}^d)}+ (V u,u)_{L_2(\mathds{R}^d)},
\end{equation}
where
\begin{equation*}
\hat{u}:=\frac{1}{(2\pi)^{\frac{d}{2}}} \cF[u],\qquad \|u\|_{L_2(\mathds{R}^d)}=\|\hat{u}\|_{L_2(\mathds{R}^d)}.
\end{equation*}
Hence, identity (\ref{3.20}) implies immediately that this form satisfies the estimate
\begin{equation*}
(a_{\min}+V_{\min})\|u\|_{L_2(\mathds{R}^d)}^2\leqslant (\cL u,u)_{L_2(\mathds{R}^d)}\leqslant
(a_{\max}+V_{\max})\|u\|_{L_2(\mathds{R}^d)}^2
\end{equation*}
and hence, the spectrum of the operator $\cL$ is located inside the segment $[a_{\min}+V_{\min},a_{\max}+V_{\max}]$. Now the second part of the theorem follows from the standard properties of the spectra of self-adjoint operators and identity (\ref{3.19}). This completes the proof of Theorem~\ref{thES}.

\section{Existence of discrete spectrum}

In this section we study the existence of the discrete spectrum of the operator $\cL$, namely, we prove Theorem~\ref{thDS1}.

\subsection{Proof of Theorem~\ref{thDS1}}

The proof is based on the minimax principle: if we find a normalized function $\vp\in L_2(\mathds{R}^d)$ such that
$\fl[\vp]<V_{min}$,
this will imply the statement of the theorem; we recall that $\fl[u]$ is the quadratic form associated with the operator $\cL$, see (\ref{3.20}).

We construct a required test function explicitly choosing it to be $\phi_\d(x-x_0)$ with $\phi_\d$ introduced in (\ref{3.7}) and $\d$ mentioned in the formulation of the theorem; we note that this function is normalized in $L_2(\mathds{R}^d)$. Having this normalization in mind, we consider the quadratic form $\fl$ on such function, namely:
\begin{equation}\label{4.1}
\begin{aligned}
\fl[\phi_\d(\,\cdot\,-x_0)]-V_{\min}
= & \d^{-d} \int\limits_{Q_\d(x_0\times Q_\d(x_0)} a(x-y) \,dxdy
 + \d^{-d} \int\limits_{Q_\d(x_0)} \big(V(x)-V_{\min}\big)\,dx
\\
=& \d^{d} \int\limits_{Q_1(0)\times Q_1(0)} a(\d(x-y)) \,dxdy
 + \int\limits_{Q_1(0)} \big(V(x_0+\d x)-V_{\min}\big)\,dx.
\end{aligned}
\end{equation}
Let us calculate the first integral in the above identity.

First of all observe that owing to condition (\ref{2.1}) we immediately get
\begin{align*}
\int\limits_{Q_1(0)\times Q_1(0)} a(\d(x-y)) \,dxdy =&
\int\limits_{Q_1(0)\times Q_1(0)} a(\d(y-x)) \,dxdy
\\
=& \int\limits_{Q_1(0)\times Q_1(0)} \overline{a(\d(x-y))} \,dxdy =\int\limits_{Q_1(0)\times Q_1(0)} \RE a(\d(x-y)) \,dxdy.
\end{align*}
Then we make the change of the variables $(x,y)\to(x-y,x+y)$:
\begin{align*}
\int\limits_{Q_1(0)\times Q_1(0)} \RE a(\d(x-y)) \,dxdy=&2^{-d} \int\limits_{Q_2(0)}\,dx \RE a(\d x) \int\limits_{\{y:\, |y_i|<1-|x_i|,\, i=1,\ldots,d\}} \,dy
\\
=& \int\limits_{Q_{2}(0)} \prod\limits_{i=1}^{d} (1-|x_i|) \RE a(\d x)\,dx.
\end{align*}
Now by (\ref{4.1}) we get:
\begin{align*}
\fl[\phi_\d(\,\cdot\,-x_0)]-V_{\min}\|\phi_\d(\,\cdot\,-x_0)\|_{L_2(\mathds{R}^d)}^2 = \d^d
\bigg(& \int\limits_{Q_2(0)} \prod\limits_{i=1}^{d} (1-|x_i|) \RE a(\d x)\,dx
\\
&+
 \d^{-d} \int\limits_{Q_1(0)} \big(V(x_0+\d x)-V_{\min}\big)\,dx
\bigg)<0.
\end{align*}
Hence, by the minimax principle we conclude that the operator $\cL$ has a non-empty discrete spectrum below $V_{\min}$.
This completes the proof of Theorem~\ref{thDS1}.

\section{Existence of finitely many eigenvalues}

In this section we discuss sufficient conditions ensuring the existence of at least finitely many eigenvalues of the operator $\cL$, namely, we prove Theorems~\ref{thDS5} and~\ref{thDS4}.

\subsection{Proof of Theorem~\ref{thDS5}}

Since the restriction of the function $a$ on $Q_{2r}(0)$ belongs to $L_1(Q_{2r}(0))$, for each $\eta>0$ there exists an infinitely differentiable function $a^\eta\in C_0^\infty(Q_{2r}(0))$ such that
\begin{equation}\label{4.3}
\|a-a^\eta\|_{L_1(Q_{2r}(0))}\leqslant \eta.
\end{equation}

Let $u=u(x)$ be an infinitely differentiable function on $\overline{Q_r(x_0)}$; we extend it by zero outside $\overline{Q_r(x_0)}$. Then by estimates (\ref{3.0}) and (\ref{4.3}) we find:
\begin{equation}\label{4.4}
\begin{aligned}
 (\cL_{a\star}u,u)_{L_2(\mathds{R}^d)}
=& \int\limits_{Q_r(x_0)\times Q_r(x_0)} a(x-y)u(y)\overline{u(x)}\,dxdy
\\
=&
\int\limits_{Q_r(0)\times Q_r(0)} a^\eta(x-y) u(y+x_0) \overline{u(x+x_0)}\,dxdy
+\fr^\eta[u],
\end{aligned}
\end{equation}
where $\fr^\eta[u]$ is a quadratic form satisfying the estimate
\begin{equation}\label{4.5a}
|\fr^\eta[u]|\leqslant \eta \|u\|_{L_2(\mathds{R})^d}^2.
\end{equation}
We represent the function $a^\eta$ by its Fourier series, namely,
\begin{equation*}
a^\eta(x)=\sum\limits_{n\in\mathds{Z}^d} a_n^\eta e^{\frac{\iu\pi}{r}n\cdot x},\quad x\in Q_{2r}(0),\qquad a_n^\eta:=(2r)^{-d} \int\limits_{Q_{2r}(0)} a^\eta(x)e^{-\frac{\iu\pi}{r}n\cdot x}\,dx.
\end{equation*}
It follows from (\ref{4.3}) that
\begin{equation*}
|a_n^\eta-a_n|\leqslant \eta,\qquad n\in\mathds{Z}^d, \qquad a_n:=(2r)^{-d}\int\limits_{Q_{2r}(0)} a(x) e^{-\frac{\iu \pi}{r} n\cdot x}\,dx.
\end{equation*}

Owing to the assumed smoothness of the function $a^\eta$, its Fourier series converges uniformly on $\overline{Q_{2r}(0)}$. This allows us to substitute this Fourier series into the first term on the right hand side of (\ref{4.4}):
\begin{equation}\label{4.6}
\begin{aligned}
\int\limits_{Q_r(0)\times Q_r(0)} &a^\eta(x-y) u(x_0+y)\overline{u(x_0+x)}\,dxdy
\\
=& \sum\limits_{n\in\mathds{Z}^d} a_n^\eta \int\limits_{Q_r(0)\times Q_r(0)}
e^{\frac{\iu\pi}{r}n\cdot (x-y)}u(x_0+y) \overline{u(x_0+x)}\,dxdy
 =\sum\limits_{n\in\mathds{Z}^d} a_n^\eta |U_n|^2,
\end{aligned}
\end{equation}
where
\begin{equation*}
U_n:=\int\limits_{Q_r(0)} e^{-\frac{\iu\pi}{r}n\cdot x} u(x_0+x)\,dx= e^{\frac{\iu\pi}{r}n\cdot x_0} \int\limits_{Q_{2r}(x_0)} e^{-\frac{\iu\pi}{r}n\cdot x} u(x)\,dx.
\end{equation*}
Here we have also employed that $u$ vanishes on $Q_{2r}(x_0)\setminus Q_r(x_0)$.
Up to a fixed multiplicative constant, the numbers $U_n$ are the Fourier coefficients of the function $u$, namely,
\begin{equation*}
u(x+x_0)=(2r)^{-d}\sum\limits_{n\in\mathds{Z}^d} U_n e^{\frac{\iu\pi}{r}n\cdot x},\qquad x\in Q_{2r}(0),
\end{equation*}
and by the Parseval identity holds:
\begin{equation}\label{4.9b}
\|u\|_{L_2(Q_{2r}(x_0))}^2=\|u\|_{L_2(Q_r(x_0))}^2= \|u(\,\cdot\,+x_0)\|_{L_2(Q_r(0))}^2=(2r)^{-d} \sum\limits_{n\in\mathds{Z}^d}
|U_n|^2.
\end{equation}

The function $u(x)$ can be also regarded as defined on the cube $Q_r(x_0)$ and it can be represented by one more Fourier series
\begin{equation*}
u(x+x_0)=r^{-d}\sum\limits_{n\in\mathds{Z}^d} u_n e^{\frac{2\pi\iu}{r}n\cdot x},\qquad x\in Q_r(0),\qquad u_n:=\int\limits_{Q_r(0)} e^{-\frac{2\pi\iu}{r}n\cdot x}u(x+x_0)\,dx.
\end{equation*}
The corresponding Parseval identity holds true:
\begin{equation}\label{4.32}
\|u\|_{L_2(Q_r(x_0))}^2=\|u(\,\cdot\,+x_0)\|_{L_2(Q_r(0))}^2= r^{-d}\sum\limits_{n\in\mathds{Z}^d} |u_n|^2.
\end{equation}
We also observe the identity
\begin{equation}\label{4.33}
U_{2n}=u_n,\qquad n\in\mathds{Z}^d,
\end{equation}
which will play an important role in what follows.

We substitute (\ref{4.6}) into (\ref{4.4}) and take into consideration estimate (\ref{4.5a}) and Parseval identity (\ref{4.9b}). This gives:
\begin{align*}
(\cL_{a\star}u,u)_{L_2(\mathds{R}^d)} \leqslant & (\cL_{a^\eta\star}u,u)_{L_2(\mathds{R}^d)}+\eta\|u\|_{L_2(Q_r(x_0))}^2 = \sum\limits_{n\in\mathds{Z}^d} a_n^\eta |U_n|^2 + \eta\|u\|_{L_2(Q_r(x_0))}^2
\\
\leqslant & \sum\limits_{n\in\mathds{Z}^d} a_n |U_n|^2 + \eta \big(1+(2r)^d\big)\|u\|_{L_2(Q_r(x_0))}^2.
\end{align*}
Passing then to the limit as $\eta\to+0$ and using identity (\ref{4.33}), we get
\begin{equation}\label{4.35}
(\cL_{a\star}u,u)_{L_2(\mathds{R}^d)} \leqslant \sum\limits_{n\in\mathds{Z}^d} a_n|U_n|^2=\sum\limits_{n\in(2\mathds{Z})^d} a_{2n}|u_n|^2 + \sum\limits_{n\in\mathds{Z}^d\setminus (2\mathds{Z})^d} a_n|U_n|^2.
\end{equation}
By the
Parseval identity (\ref{4.9b}) we obtain:
\begin{align*}
\sum\limits_{n\in \mathds{Z}^d \setminus (2\mathds{Z})^d} a_n |U_n|^2 \leqslant \sup\limits_{n\in\mathds{Z}^d\setminus (2\mathds{Z})^d} a_n \sum\limits_{n\in\mathds{Z}^d\setminus (2\mathds{Z})^d}|U_n|^2 \leqslant \a \|u\|_{L_2(Q_r(x_0))}^2,\qquad \a:=(2r)^d \sup\limits_{n\in\mathds{Z}^d\setminus (2\mathds{Z})^d} a_n;
\end{align*}
here we have also used the inequality $\alpha\geqslant 0$.
This allows us to rewrite estimate (\ref{4.35}) as
\begin{equation}\label{4.36}
(\cL_{a\star}u,u)_{L_2(\mathds{R}^d)} \leqslant \sum\limits_{n\in\mathds{Z}^d} a_{2n}|u_n|^2 + \a \|u\|_{L_2(Q_r(x_0))}^2.
\end{equation}

Now let us consider test functions $u\in L^2(\mathds R^d)$ supported in the cube $Q_r(x_0)$. We suppose that on
$Q_r(x_0)$ the function $u$ is a finite linear combination
\begin{equation}\label{4.37}
u(x)=r^{-d}\sum\limits_{n\in \mathds{J}} u_n e^{\frac{2\pi\iu}{r}n\cdot (x-x_0)},
\end{equation}
where $\mathds{J}$ is a finite subset of $\mathds{J}_0$. Then for such $u$ estimate (\ref{4.36}) becomes
\begin{equation}\label{4.38}
(\cL_{a\star}u,u)_{L_2(\mathds{R}^d)} \leqslant \sum\limits_{n\in \mathds{J}} a_{2n}|u_n|^2 +\a \|u\|_{L_2(Q_r(x_0))}^2.
\end{equation}
We also have:
\begin{align*}
\big((V(x)-V_{\min})u,u\big)_{L_2(\mathds{R}^d)}=&\int\limits_{Q_r(x_0)} (V(x)-V_{\min}) |u(x)|^2\,dx
\\
=& r^{-2d} \sum\limits_{m,n\in \mathds{J}} u_n \overline{u_m} \int\limits_{Q_r(0)} \big(V(x+x_0)-V_{\min}\big) e^{\frac{2\pi\iu}{r}(n-m)\cdot x}\,dx
 \\
 =& r^{-2d} \sum\limits_{n,m\in \mathds{J}} u_n \overline{u_m} V_{n-m}.
\end{align*}
Hence, by Cauchy-Schwartz inequality and Parseval identity,
\begin{align*}
\big((V(x)-V_{\min})u,u\big)_{L_2(\mathds{R}^d)} \leqslant &r^{-2d} \left(\sum\limits_{n\in \mathds{J}} |u_n|^2\right)^{\frac{1}{2}} \left(\sum\limits_{n\in \mathds{J}} \bigg|\sum\limits_{m\in \mathds{J}} V_{n-m}\overline{u_m}\bigg|^2\right)^{\frac{1}{2}}
\\
\leqslant & r^{-2d} \left(\sum\limits_{n\in \mathds{J}} |u_n|^2\right)^{\frac{1}{2}}
\left(\sum\limits_{n\in \mathds{J}} \left(\sum\limits_{m\in \mathds{J}} |V_{n-m}|\right)
\left(\sum\limits_{m\in \mathds{J}} |V_{n-m}||u_m|^2\right)\right)^{\frac{1}{2}}
\\
\leqslant & r^{-\frac{3}{2}d} \left(\sum\limits_{n\in \mathds{J}} |u_n|^2\right)^{\frac{1}{2}} \nu_\mathds{J}^{\frac{1}{2}} \left(\sum\limits_{m,n\in \mathds{J}} |V_{n-m}||u_n|^2\right)^{\frac{1}{2}} \leqslant \nu_\mathds{J} \|u\|_{L_2(Q_r(x_0))}^2.
\end{align*}
This estimate, (\ref{4.38}) and Parseval identity (\ref{4.32}) lead us to a final estimate for the form of the operator $\cL$ on the functions $u$ defined in (\ref{4.37}):
\begin{equation}\label{4.40}
\fl[u]-V_{\min}\|u\|_{L_2(\mathds{R}^d)}^2 \leqslant (r^d \max\limits_{n\in \mathds{J}} a_{2n} + \a + \nu_\mathds{J}) \|u\|_{L_2(Q_r(x_0))}^2.
\end{equation}

We substitute for $\mathds{J}$ in (\ref{4.40}) the set $\mathds{I}$ from the formulation of the theorem. Then combining (\ref{2.16}) and (\ref{4.40}) yields
\begin{equation*}
\fl[u]-V_{\min}\|u\|_{L_2(\mathds{R}^d)}^2 <0
\end{equation*}
for all linear combinations (\ref{4.37}) with $\mathds{J}\subset \mathds{I}$. By the minimax principle this implies that the operator $\cL$ possesses at least $\# \mathds{I}$ eigenvalues below $V_{\min}$ and this completes the proof of the first statement in the theorem.

Now as the set $\mathds{J}$ in (\ref{4.40}) we choose $\mathds{J}:=\{n\}$ with $n\in \mathds{I}$. Then estimate (\ref{4.40}) becomes
\begin{equation*}
\fl[u]-V_{\min}\|u\|_{L_2(\mathds{R}^d)}^2 \leqslant r^d a_{2n} + \a + r^{-d}|V_0|,\qquad |V_0|=V_0.
\end{equation*}
Taking the infimum over $n\in \mathds{J}$ of the right hand side in the above inequality, by the minimax principle we arrive at (\ref{2.17}). This completes the proof of Theorem~\ref{thDS5}.

\subsection{Proof of Theorem~\ref{thDS4}}

Let $U$ be an arbitrary function defined on the cube $Q_1(0)$ and being an element of $L_2(Q_1(0))$. We extend all such functions by zero outside $Q_1(0)$. Then we choose a sufficiently small $\d$ and let $u_\d(x):=U((x-x_0)\d^{-1})$. This function is supported in $Q_\d(x_0)$.
The quadratic form of the operator $\cL_{a\star}$ on the function $u_\d$ reads as
\begin{equation}\label{4.10}
\begin{aligned}
(\cL_{a\star}u_\d,u_\d)_{L_2(\mathds{R}^d)} = &\int\limits_{Q_\d(x_0)\times Q_\d(x_0)} a(x-y) u_\d(y)\overline{u_\d(x)}\,dxdy
\\
=&\d^{2d} \int\limits_{Q_1(0)\times Q_1(0)} a(\d(x-y)) U(y)\overline{U(x)}\,dxdy.
\end{aligned}
\end{equation}

Since the function $a$ is smooth, we can represent it by the Taylor formula as
\begin{equation}\label{4.11}
a(\d(x-y))=\sum\limits_{j=0}^{2N}\d^j A_j(\xi) + \d^{2N+1} \tilde{A}_{2N+1}(x-y,\d),
\end{equation}
where $A_j$ are homogeneous polynomials of degree $j$ given by the formulae
\begin{equation*}
A_j(\xi)=\sum\limits_{\substack{n\in\mathds{Z}_+^d \\ |n|=j}} \frac{\p^n a(0)}{n!}\xi^n.
\end{equation*}
The remainder $\tilde{A}_{2N+1}$ in (\ref{4.11}) satisfies the uniform estimate
\begin{equation}\label{4.13}
|\tilde{A}_{2N+1}(\xi,\d)|\leqslant C\quad\text{for all}\quad \xi\in \overline{Q_2(0)},
\end{equation}
where $C$ is some constant independent of $\xi$ and $\d$. Since
\begin{equation*}
(x-y)^n=\sum\limits_{\substack{m,q\in\mathds{Z}_+^d \\ m+q=n}} (-1)^{|q|} \frac{n!}{m!q!} x^m y^q,
\end{equation*}
we immediately get
\begin{equation*}
A_j(x-y) = \sum\limits_{\substack{m,q\in\mathds{Z}_+^d \\ |m|+|q|=j}} (-1)^{|q|} \p^{m+q} a(0)
\frac{x^m y^q}{m!q!}.
\end{equation*}
We substitute this formula into (\ref{4.11}) and the result is plugged in (\ref{4.10}). Denoting then
\begin{equation*}
U_m:=\int\limits_{Q_1(0)} x^m U(x)\,dx,
\end{equation*}
we arrive at the identities
\begin{equation}\label{4.14}
\begin{aligned}
(\cL_{a\star}u_\d,u_\d)_{L_2(\mathds{R}^d)} =& \sum\limits_{\substack{m,q\in\mathds{Z}_+^d \\ |m|+|q|\leqslant 2N }} (-1)^{|q|} \d^{|m|+|q|} \p^{m+q}a(0)
\frac{U_m \overline{U_q}}{m!q!}
\\
&+ \d^{2N+1} \int\limits_{Q_1(0)\times Q_1(0)} A_{2N+1}(x-y,\d) U(x)\overline{U(y)}\,dxdy.
\end{aligned}
\end{equation}

Estimate (\ref{4.13}) yields immediately that
\begin{equation*}
\left|\d^{2N+1} \int\limits_{Q_1(0)\times Q_1(0)} A_{2N+1}(x-y,\d) U(x)\overline{U(y)}\,dxdy\right| \leqslant C\d^{2N+1} \|U\|_{L_2(Q_1(0))}^2,
\end{equation*}
where $C$ is some constant independent of $\d$ and $U$.

For now on we specify the choice of the function $U$. Namely, we assume that it is a polynomial of degree at most $N$, i.e.,
\begin{equation}\label{4.17}
U(x)=\sum\limits_{\substack{m\in\mathds{Z}^d_+ \\ |m|\leqslant N}} c_m x^m.
\end{equation}
Then we have
\begin{equation}\label{4.16}
U_n=\sum\limits_{\substack{m\in\mathds{Z}^d_+ \\ |m|\leqslant N}} c_m \int\limits_{Q_1(0)} x^{n+m}\,dx.
\end{equation}
A matrix of size $M(N)\times M(N)$ with entries $\int\limits_{Q_1(0)} x^{n+m}\,dx$, is the Gram matrix of linearly independent functions $\{x^n\}$, $n\in\mathds{Z}_+^d$, $|n|\leqslant N$, and hence, this matrix is non-degenerate. Then it follows from (\ref{4.16}) that the coefficients $c_m$ are expressed as linear combinations of $U_n$, $n\in\mathds{Z}_+^d$, $|n|\leqslant N$. Therefore, each polynomial (\ref{4.17}) can be equivalently characterized be means of the coefficients $U_n$, $n\in\mathds{Z}_+^d$, $|n|\leqslant N$. In particular, this implies uniform estimates
\begin{align}\label{4.19}
&\tilde c^{-1}\|U\|_{L_2(Q_1(0))}^2\leqslant \sum\limits_{\substack{n\in\mathds{Z}^d_+ \\ |n|\leqslant N}} |U_n|^2 \leqslant \tilde c
\sum\limits_{\substack{n\in\mathds{Z}^d_+ \\ |n|\leqslant N}} |U_n|^2,
\\
&\label{4.20}
\sum\limits_{\substack{n\in\mathds{Z}^d_+ \\ N+1\leqslant |n|\leqslant 2N}} |U_n|^2 \leqslant C\|U\|_{L_2(Q_1(0))}^2,
\end{align}
where $\tilde c$ and $C$ are constants independent of $U$.

We rewrite the first term in the right hand side of (\ref{4.14}) as
\begin{equation}\label{4.21}
\begin{aligned}
\sum\limits_{\substack{m,q\in\mathds{Z}_+^d \\ |m|+|q|\leqslant 2N }} (-1)^{|q|} \d^{|m|+|q|} \p^{m+q}a(0)
\frac{U_m \overline{U_q}}{m!q!}
= &
\sum\limits_{\substack{m,q\in\mathds{Z}_+^d \\ |m|,|q|\leqslant N}} (-1)^{|q|} \d^{|m|+|q|} \p^{m+q}a(0)
\frac{U_m \overline{U_q}}{m!q!}
\\
+ &\sum\limits_{\substack{m,q\in\mathds{Z}_+^d, \, |m|+|q|\leqslant 2N \\ |m|\geqslant N+1\,\text{or}\, |q|\geqslant N+1}} (-1)^{|q|} \d^{|m|+|q|} \p^{m+q}a(0)\frac{U_m \overline{U_q}}{m!q!}.
\end{aligned}
\end{equation}
By (\ref{4.20}) we can estimate the second term in the right hand side of the above identity as follows:
\begin{equation}\label{4.22}
\left|\sum\limits_{\substack{m,q\in\mathds{Z}_+^d,\, |m|+|q|\leqslant 2N \\ |m|\geqslant N+1\,\text{or}\, |q|\geqslant N+1}} (-1)^{|q|} \d^{|m|+|q|} \p^{m+q}a(0)
\frac{U_m \overline{U_q}}{m!q!}
\right|\leqslant C\d^{N+1} \|U\|_{L_2(Q_1(0))}^2,
\end{equation}
where $C$ is a constant independent of $\delta$ and $U$. In view of the definition of the form $\fa_N$ in (\ref{2.12}), the first term in the right hand side of (\ref{4.21}) can be expressed as
\begin{equation*}
\sum\limits_{\substack{m,q\in\mathds{Z}_+^d \\ |m|,|q|\leqslant N }} (-1)^{|q|} \d^{|m|+|q|} \p^{m+q}a(0)
\frac{U_m \overline{U_q}}{m!q!}
=\fa_N[\ru_\d],\qquad \ru_\d:=\left(\d^{|m|} \frac{U_m}{m!}\right)_{m\in\mathds{Z}_+^d,\, |m|\leqslant N}.
\end{equation*}
It follows from (\ref{4.19}) that
\begin{equation}\label{4.24}
\|\ru_\d\|_{\mathds{C}^{M(N)}}^2\geqslant \tilde c_0\d^{2N} \|U\|_{L_2(Q_1(0))}^2,
\end{equation}
where $\tilde c_0$ is a positive constant independent of $\delta$ and $U$.

Since by the assumptions of the theorem the form $\fa_N$ is negative definite on the subspace $S$, there exists a constant $\tilde c_1>0$ independent of $\d$ and $\ru_\d$ such that
\begin{equation*}
\fa_N[\ru_\d]\leqslant -\tilde c_1 \|\ru_\d\|_{\mathds{C}^{M(N)}}^2
\end{equation*}
for each $\ru_\d\in S$. Hence, for each polynomial (\ref{4.17}), for which the corresponding vector $\ru_\d$ belongs to $S$, by (\ref{4.24}) we obtain:
\begin{equation*}
\fa_N[\ru_\d] \leqslant -\tilde c_1 \tilde c_0 \d^{2N} \|U\|_{L_2(Q_1(0))}^2.
\end{equation*}
The above inequality and (\ref{4.22}) allow us to estimate the form $(\cL_{a\star} u_\d,u_\d)_{L_2(\mathds{R}^d)}$ from above for sufficiently small $\delta$ as follows:
\begin{equation}\label{4.25}
(\cL_{a\star} u_\d,u_\d)_{L_2(\mathds{R}^d)} \leqslant \d^{2(N+d)} (-\tilde c_0 \tilde c_1+\d \tilde c)\|U\|_{L_2(Q_1(0))}^2\leqslant
-\frac{\tilde c_0 \tilde c_1}{2}\d^{2(N+d)} \|U\|_{L_2(Q_1(0))}^2.
\end{equation}

We proceed to estimating the contribution of the potential $V$ to the form of the operator $\cL$. Namely, we have:
\begin{align*}
(\cL_{V} u_\d, u_\d)_{L_2(\mathds{R}^d)}- V_{\min} \|u_\d\|_{L_2(\mathds{R}^d)}^2 =& \int\limits_{\mathds{R}^d} (V(x)-V_{\min}) |u_\d(x)|^2\,dx
\\
=&\d^d \int\limits_{Q_1(0)} \big(V(x_0+\d x)-V_{\min}\big)|U(x)|^2\,dx
\\
=&\d^d \sum\limits_{\substack{m,q\in\mathds{Z}_+^d \\ |m|,|q|\leqslant 2N}}
c_n \overline{c_m} \int\limits_{Q_1(0)} \big(V(x_0+\d x)-V_{\min}\big) x^{n+m}\,dx.
\end{align*}
Hence, by definition (\ref{2.13}) of the function $h_N(\d)$ and inequality (\ref{4.19}) we find:
\begin{equation*}
(\cL_{V} u_\d, u_\d)_{L_2(\mathds{R}^d)}- V_{\min} \|u_\d\|_{L_2(\mathds{R}^d)}^2\leqslant C\d^d h_N(\d) \|U\|_{L_2(Q_1(0))}^2,
\end{equation*}
where $C$ is some constant independent of $\d$ and $U$. This estimate and (\ref{4.25}) yield:
\begin{align*}
(\cL u_\d,u_\d)_{L_2(\mathds{R}^d)} - V_{\min}\|u_\d\|_{L_2(\mathds{R}^d)}^2 \leqslant \d^{2(N+d)} \left(-\frac{\tilde c_0
\tilde c_1}{2} + C \frac{h_N(\d)}{\d^{2N+d}} \right) \|U\|_{L_2(\mathds{R}^d)}^2.
\end{align*}
Applying condition (\ref{2.14}), we finally see that for sufficiently small $\d$ the estimate
\begin{equation*}
(\cL u_\d,u_\d)_{L_2(\mathds{R}^d)} \leqslant -\frac{\tilde c_0 \tilde c_1}{3}\d^{2(N+d)}\|U\|_{L_2(\mathds{R}^d)}^2
\end{equation*}
holds true for each polynomial $U$ defined by formula (\ref{4.17}),
for which the corresponding vector $\ru_\d$ belongs to $S$.
Since the dimension of the space of such polynomials coincides with that of the subspace $S$, by the minimax principle we conclude that the operator $\cL$ possesses at least $\dim S$ discrete eigenvalues below $\mu_0$. The proof of Theorem~\ref{thDS4} is complete.

\subsection{Proof of Theorem~\ref{thDS6}}

We introduce a subspace $S$ in $\mathds{C}^{M(N)}$ consisting of vectors $\z=(\z_n)_{ n\in\mathds{Z}_+^d,\, |n|\leqslant N}$ such that $\z_n=0$ as $n\notin \mathds{I}$. It is obvious that the dimension of the space $S$ coincides with $\#\mathds{I}$.

Let us show that the conditions of this theorem imply the assumptions of Theorem~\ref{thDS4} with the introduced subspaces $S$. We begin with studying the form $\fa_N$. We rewrite definition (\ref{2.12}) of this form as
\begin{equation}\label{5.1}
\fa_N[\z]=\sum\limits_{n\in\mathds{I}}
(-1)^{|n|} \p^{2n} a(0) |\z_n|^2 + \sum\limits_{n,m\in\mathds{I}, \, n\ne m} (-1)^{|n|}
\p^{n+m} a(0)\z_m\overline{\z_n}.
\end{equation}
Inequalities (\ref{2.19}) then yield
\begin{equation}\label{5.2}
\sum\limits_{n\in\mathds{I}}
(-1)^{|n|} \p^{2n} a(0) |\z_n|^2 = -\sum\limits_{n\in\mathds{I}}
|\p^{2n} a(0)| |\z_n|^2.
\end{equation}
Employing condition (\ref{2.20}) and assuming that (\ref{2.21_bis}) holds, we estimate the second term in (\ref{5.1}) as follows:
\begin{align*}
\Bigg|\sum\limits_{n,m\in\mathds{I}, \, n\ne m}(-1)^{|n|}
\p^{n+m} a(0)\z_m\overline{\z_n}\Bigg| &\leqslant \sum\limits_{n,m\in\mathds{I}, \, n\ne m} \b_{n,m}  \sqrt{|\p^{2n}a(0)|} \sqrt{|\p^{2m}a(0)|} |\z_n||\z_m|
\\
&\leqslant \frac12\sum\limits_{n\in\mathds{I}}\sum\limits_{\substack{m\in\mathds{I}, \\ m\ne n}}\beta_{n,m}|\p^{2n}a(0)| |\z_n|^2\ + \
 \frac12\sum\limits_{m\in\mathds{I}}\sum\limits_{\substack{n\in\mathds{I}, \\ n\ne m}}\beta_{n,m}|\p^{2m}a(0)| |\z_m|^2
\\
& \leqslant \b_1\sum\limits_{n\in\mathds{I}}|\p^{2n}a(0)| |\z_n|^2.
\end{align*}
Combining this estimate with  identities (\ref{5.1}), (\ref{5.2})  we obtain the following estimate for the form $\fa_N$:
\begin{equation*}
\fa_N[\z]\leqslant -(1-\b_1)\sum\limits_{n\in\mathds{I}}  |\p^{2n}a(0)| |\z_n|^2,
\end{equation*}
and, since  $\b_1<1$, we conclude that the form $\fa_N$ is negative definite.

Recalling the definition of $h_N(\d)$  in (\ref{2.13}), it is straightforward to show that estimate (\ref{2.22}) implies the following estimate:
\begin{equation*}
|h_N(\d)|\leqslant C\d^\a,
\end{equation*}
where $C$ is some constant independent of $\d$. Since $2N<\a-d$ by the assumption of $N$, condition (\ref{2.14}) is fulfilled, and therefore
Theorem  \ref{thDS4} applies.

If (\ref{2.21}) holds, the second term on the right-hand side of (\ref{5.1}) can be estimated as follows:
\begin{align*}
\Bigg|\sum\limits_{n,m\in\mathds{I}, \, n\ne m} (-1)^{|n|}
\p^{n+m} a(0)\z_m\overline{\z_n}\Bigg| \leqslant  & \sum\limits_{n,m\in\mathds{I}, \, n\ne m} \b_{n,m}  \sqrt{|\p^{2n}a(0)|} \sqrt{|\p^{2m}a(0)|} |\z_n||\z_m|
\\
\leqslant&\Big(\sum\limits_{\substack{n,m\in\mathds{I}, \\ n\ne m}} \b^2_{n,m}\Big)^\frac12
\Big(\sum\limits_{\substack{n,m\in\mathds{I}, \\ n\ne m}}|\p^{2n}a(0)|\,|\p^{2m}a(0)|\, |\z_n|^2|\z_m|^2\Big)^\frac12\\
=&\b_2^\frac{1}{2}\bigg(\Big(\sum\limits_{n\in\mathds{I}}|\p^{2n}a(0)|\, |\z_n|^2\Big)^2-\sum\limits_{n\in\mathds{I}}|\p^{2n}a(0)|^2 |\z_n|^4\bigg)^\frac12
\end{align*}
Therefore,
\begin{align*}
\fa_N[\z]\leqslant& -\sum\limits_{n\in\mathds{I}}
\big|\p^{2n} a(0)\big| |\z_n|^2+\b_2^\frac{1}{2} \bigg(\Big(\sum\limits_{n\in\mathds{I}}|\p^{2n}a(0)|\, |\z_n|^2\Big)^2-\sum\limits_{n\in\mathds{I}}|\p^{2n}a(0)|^2 |\z_n|^4\bigg)^\frac12
\\[4mm]
=&\frac{\b_2 \bigg(\Big(\sum\limits_{n\in\mathds{I}}|\p^{2n}a(0)|\, |\z_n|^2\Big)^2-\sum\limits_{n\in\mathds{I}}|\p^{2n}a(0)|^2 |\z_n|^4\bigg)-\Big(\sum\limits_{n\in\mathds{I}} \big|\p^{2n} a(0)\big| |\z_n|^2\Big)^2}
{ \b_2^\frac12  \bigg(\Big(\sum\limits_{n\in\mathds{I}}|\p^{2n}a(0)|\, |\z_n|^2\Big)^2-\sum\limits_{n\in\mathds{I}}|\p^{2n}a(0)|^2 |\z_n|^4\bigg)^\frac12 +\sum\limits_{n\in\mathds{I}}|\p^{2n}a(0)| |\z_n|^2}
\\[4mm]
\leqslant&-\big(1-\b_2\big)\sum\limits_{n\in\mathds{I}}|\p^{2n}a(0)| |\z_n|^2-\b_2
\Big(\sum\limits_{n\in\mathds{I}}|\p^{2n}a(0)|^2 |\z_n|^4\Big)\Big(\sum\limits_{n\in\mathds{I}}|\p^{2n}a(0)|\, |\z_n|^2\Big)^{-1}
\end{align*}
Considering the inequality
$$
\Big(\sum\limits_{n\in\mathds{I}}|\p^{2n}a(0)| |\z_n|^2\Big)^2\leqslant (\#\mathds{I})\sum\limits_{n\in\mathds{I}}|\p^{2n}a(0)|^2 |\z_n|^4,
$$
we derive from the latter estimate the following upper bound:
$$
\fa_N[\z]\leqslant- \big( (1-\b_2)+(\#\mathds{I})\big.^{-\frac12}\b_2\big)\Big(\sum\limits_{n\in\mathds{I}}|\p^{2n}a(0)|^2 |\z_n|^4
\Big)\Big(\sum\limits_{n\in\mathds{I}}|\p^{2n}a(0)|\, |\z_n|^2\Big)^{-1}.
$$
Therefore, the form  $\fa_N[\z]$ is negative definite if  $1-\b_2+(\#\mathds{I})^{-\frac12}\b_2>0$ or, equivalently,
$$
\b_2 <\frac{(\#\mathds{I})^{\frac12}}{(\#\mathds{I})^{\frac12}-1}.
$$
This completes the proof of Theorem \ref{thDS6}.

\subsection{Proof of Theorem \ref{th_suf-infin}}

It suffices to check that under the assumptions of Theorem \ref{th_suf-infin} there exists an infinite subset
$\tilde{\mathds{I}}\subset{\mathds{I}}$  such that condition (\ref{2.21_bis}) holds for $\tilde{\mathds{I}}$. Indeed, letting
$\tilde{\mathds{I}}_N=\{n\in \tilde{\mathds{I}}\,:\,|n|\leqslant N\}$, $N\in\mathds Z_+$, and assuming  that (\ref{2.21_bis}) holds
for $\tilde{\mathds{I}}$, by Theorem \ref{thDS6}
we obtain that there exist at least $\#\tilde{\mathds{I}}_N$ points of the discrete spectrum of $\mathcal{L}$ below $\mu_0$. Since $N$ is an arbitrary
number from $\mathds{N}$, and $\#\tilde{\mathds{I}}_N$ tends to infinity as $N\to\infty$,  the desired statement follows.

It remains to construct a subset $\tilde{\mathds{I}}$ that satisfies the aforementioned  conditions.
 \begin{lemma}\label{l_order}
   There exists an infinite sequence $n^1, n^2,\ldots, n^j,\ldots$ with $n^j\in\mathds{I}$ such that
   $n^{j+1}_k\geqslant n^j_k$ for all $k=1,\ldots, d$ and all $j\in\mathds{Z}_+$, and $n^{j+1}\ne n^j$.
 \end{lemma}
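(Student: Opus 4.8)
The claim in Lemma~\ref{l_order} is a purely combinatorial statement about the infinite set $\mathds{I}\subseteq\mathds{Z}_+^d$: it asserts the existence of an infinite chain $n^1\prec n^2\prec\cdots$ inside $\mathds{I}$, where $\prec$ is the coordinatewise partial order (with distinct consecutive terms). The natural tool here is Dickson's lemma, equivalently the fact that $(\mathds{Z}_+^d,\prec)$ is a well-quasi-order: every infinite subset of $\mathds{Z}_+^d$ contains an infinite chain. I would either cite this directly or give the short self-contained argument below.

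The plan is to argue by induction on the dimension $d$, or more efficiently via an iterated pigeonhole / Ramsey-type extraction. Here is the extraction I would carry out. Since $\mathds{I}$ is infinite, I first observe that there must exist a coordinate direction, say the $k$-th, along which the first coordinate-function $n\mapsto n_k$ is unbounded on $\mathds{I}$ --- otherwise $\mathds{I}$ would be contained in a finite box and hence finite. Actually it is cleaner to do the following: consider the infinitely many elements of $\mathds{I}$; since $\mathds{Z}_+^d$ with $\prec$ is a well-quasi-order, there is no infinite antichain and no infinite strictly descending sequence, so by Ramsey's theorem applied to the complete graph on a countable subset of $\mathds{I}$, coloring each pair $\{n,m\}$ by whether it is $\prec$-comparable or not, we get an infinite monochromatic subset; the ``incomparable'' color is excluded (no infinite antichain), so we get an infinite chain. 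Within that chain, which is linearly ordered by $\prec$, we extract a strictly increasing subsequence $n^1\prec n^2\prec\cdots$ with $n^{j+1}\ne n^j$, which is exactly the asserted sequence.

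The one point requiring a genuine (though standard) argument is \emph{why there is no infinite $\prec$-antichain in $\mathds{Z}_+^d$} and no infinite strictly decreasing sequence. The latter is immediate since coordinates are nonnegative integers and would have to decrease forever. For the former, I would induct on $d$: for $d=1$ the order is total so there are no antichains of size $\geqslant 2$; for the inductive step, given an infinite antichain in $\mathds{Z}_+^{d}$, pigeonhole the first coordinate --- if it takes infinitely many values, pass to a subsequence on which it is strictly increasing, and then the remaining $d-1$ coordinates must themselves form an infinite antichain (if two of them were comparable, the full vectors would be comparable because the first coordinates are already ordered the wrong way... one must be careful with the direction here, so instead one extracts a subsequence where the first coordinate is nondecreasing and the tails form an antichain, contradicting the induction hypothesis); if the first coordinate takes only finitely many values, one of them is attained infinitely often and the tails form an infinite antichain in $\mathds{Z}_+^{d-1}$, again a contradiction.

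I expect the main (and only real) obstacle to be bookkeeping the directions of the inequalities correctly in the antichain argument, and deciding how much of Dickson's lemma to prove versus cite; none of it is deep. In the write-up I would most likely just invoke Dickson's lemma / the well-quasi-ordering of $(\mathds{Z}_+^d,\prec)$ and immediately deduce the existence of the infinite increasing chain, keeping the proof of the lemma to a few lines.
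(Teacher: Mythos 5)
The paper states Lemma~\ref{l_order} without any proof at all: it is simply asserted (evidently regarded as a standard combinatorial fact) and then used in the proof of Theorem~\ref{th_suf-infin}, so there is no argument of the authors to compare yours against. Your identification of the statement as exactly Dickson's lemma (the well-quasi-ordering of $\mathds{Z}_+^d$ under the coordinatewise order) is correct, and citing it, or proving it, is a perfectly adequate -- indeed more careful -- way to fill this gap. Two remarks on your proposed write-up. First, the Ramsey route (colour pairs by comparability, exclude the antichain colour, then extract an increasing $\omega$-sequence from the resulting chain using well-foundedness, or the fact that each point of $\mathds{Z}_+^d$ has only finitely many points below it) does work, but it is heavier than needed. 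The shortest self-contained proof is the iterated subsequence extraction you only gesture at: enumerate the distinct elements of $\mathds{I}$ as a sequence; any infinite sequence of nonnegative integers has a nondecreasing subsequence, so pass successively to subsequences along which the first, second, \dots, $d$-th coordinates are nondecreasing; the final subsequence is coordinatewise nondecreasing and consists of distinct elements of $\mathds{I}$, which is precisely the claim (the condition $n^{j+1}\ne n^j$ is automatic since the elements of $\mathds{I}$ are distinct).

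Second, the one genuinely shaky step in your sketch is the inner induction showing there is no infinite antichain: after passing to a subsequence of an alleged antichain on which the first coordinate is nondecreasing, the tails in $\mathds{Z}_+^{d-1}$ need \emph{not} form an antichain -- two tails may be comparable in the direction opposite to the first coordinates without making the full vectors comparable -- so ``the tails form an antichain, contradicting the induction hypothesis'' does not go through as stated (you flagged this yourself). The correct fix is to induct on the stronger statement (Dickson's lemma itself): extract, by the induction hypothesis, a further subsequence on which the tails are coordinatewise nondecreasing; then two full vectors in that subsequence are comparable, contradicting the antichain assumption. With that repair, or with the direct extraction argument above, your proof is complete and supplies a justification the paper itself omits.
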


We choose the indices $j_\ell$ in such a way that at least for one $k\in\{1,\ldots,d\}$  the inequality
$$
n^{j_{\ell+1}}_k\geqslant 2|n^{j_\ell}|
$$
 holds.
 Denote  $|n|_\infty=\max\limits_k n_k$.
Then for  each  $m\in \mathds Z_+$ and $\ell\in\mathds Z_+$, $m<\ell$ we have
\begin{equation*}
\frac{(2n^{j_\ell})!\,(2n^{j_m})!} {\big((n^{j_\ell}+n^{j_m})!\big)^2}=
\prod\limits_{k=1}^d \frac{(2n_k^{j_\ell})!\,(2n_k^{j_m})!} {\big((n_k^{j_\ell}+n_k^{j_m})!\big)^2}\geqslant
\frac{(2n_{k_0}^{j_\ell})!\,(2n_{k_0}^{j_m})!} {\big((n_{k_0}^{j_\ell}+n_{k_0}^{j_m})!\big)^2}
\geqslant
\Big(\frac32\Big)^{\frac12|n^{j_\ell}|_\infty};
\end{equation*}
here the index $k_0$ is such that $|n^{j_\ell}|_\infty=n_{k_0}^{j_\ell}$.  In view of conditions (\ref{con2}) and (\ref{con3}), this estimate yields
the following inequalities:
$$
\frac{(|\p^{2n^{j_\ell}}a(0)|\,|\p^{2n^{j_m}}a(0)|)^\frac12}{|\p^{n^{j_\ell}+n^{j_m}}a(0)|}
\geqslant \frac{c_1\big((2n^{j_\ell})!\,(2n^{j_m})!\big)^\frac\gamma2}{c_2\big((n^{j_\ell}+n^{j_m})!\big)^\gamma}
\geqslant \frac{c_1}{c_2}\,\Big(\frac32\Big)^{\frac\gamma4|n^{j_\ell}|_\infty}
$$
Choosing $n^{j_\ell}$ in such a way that
$$
\frac{c_1}{c_2}\,\Big(\frac32\Big)^{\frac\gamma4|n^{j_\ell}|_\infty} \geqslant 2^\ell\quad\text{for all}\quad\ell\in\mathds{Z}_+,
$$
we obtain the desired subset $\tilde{\mathds{I}}$ and complete the proof.

\section{Upper bound for the number of discrete eigenvalues}

In this section we prove Theorem~\ref{thDS3} establishing in this way an upper bound for the number of the discrete eigenvalues under the threshold of the essential spectrum. The proof of this theorem follows the main ideas of the Birman-Schwinger principle,
see e.g. \cite[Thm. XIII.10]{RS-4}, but
with appropriate modifications needed for our operator $\cL$.

We begin with introducing an auxiliary operator
\begin{equation*}
\cL^{(-)}:= - (\cL_{a_-\star} + \cL_{V_-}),
\end{equation*}
where $a_-$ is defined by the identity $\cF[a_-] = \hat{a}_-$. According to the definition of the functions $\hat{a}_-$ and $V_-$ and by identities (\ref{3.2}) we conclude immediately that both operators $\cL_{a_-\star}$ and $\cL_{V_-}$ are non-positive. Hence, the operator $\cL^{(-)}$ is non-negative.

We denote by $E_n$ and $E_n^{(-)}$ respectively the discrete eigenvalues of the operators $\cL$ and $\cL^{(-)}$ below $V_{\min}$ taken counting their multiplicities. By $N_0$ and $N^{(-)}$ we denote respectively the total number of the eigenvalues $E_n$ and $E_n^{(-)}$, that is,
$$
N_0 = \# \{n:\, E_n < \mu_0\}, \qquad N^{(-)} = \# \{ n:\, E_n^{(-)} < \mu_0\}.
$$
Then expression (\ref{3.20}) for the form of the operators $\cL$, a similar expression for the form of the operator $\cL^{(-)}$ and the minimax principle imply that
\begin{equation}\label{4.27}
N_0 \leqslant N^{(-)}.
\end{equation}
Hence, it is sufficient to find an upper bound for $N^{(-)}$.

We observe that if some $E<\mu_0\leqslant0$ is an eigenvalue of the operator $\cL^{(-)}$ and a corresponding eigenfunction
 $\psi$ solving the equation
$$
\big( E- \cL^{(-)} \big)\psi=0,
$$
then the function $\vp:=V_-^{\frac{1}{2}}\psi$ is a solution of the equation
\begin{equation}\label{2.4.1}
\vp=-V_-^{\frac{1}{2}} \big( \cL_{a_-\star} + E\big)^{-1} V_-^{\frac{1}{2}} \vp.
\end{equation}
Here the function $V_-^{\frac{1}{2}}$ is well-defined and non-negative since the function $V_-$ is non-negative by its definition. Equation (\ref{2.4.1}) also means that $1$ is an eigenvalue of the operator $-V_-^{\frac{1}{2}} \big( \cL_{a_-\star} + E\big)^{-1} V_-^{\frac{1}{2}}$ if $E$ is an eigenvalue of the operator $\cL^{(-)}$.

Since by the assumption of the theorem  we have
$$
\min \hat{a}_-=\inf \hat{a}=a_{\min}\geqslant V_{\min}=\mu_0,
$$
then for $E<\mu_0\leqslant 0$ the inverse operator $(\cL_{a_-\star}+E)^{-1}$ is well-defined and bounded in $L_2(\mathds{R}^d)$. It can be easily found by means of formulae (\ref{3.2}):
\begin{equation}\label{4.28}
\begin{aligned}
(\cL_{a_-\star}+E)^{-1}=& \left(\frac{1}{(2\pi)^\frac{d}{2}}
\cF\right)^{-1} (\cL_{\hat{a}_-}+E)^{-1} \left(\frac{1}{(2\pi)^\frac{d}{2}}
\cF\right)
\\
=& \left(\frac{1}{(2\pi)^\frac{d}{2}}
\cF\right)^{-1} (\hat{a}_- + E)^{-1} \left(\frac{1}{(2\pi)^\frac{d}{2}}
\cF\right).
\end{aligned}
\end{equation}
Then, owing to a simple identity
\begin{equation*}
\frac{1}{\hat{a}_- + E}=\frac{1}{E} (1+b_E),\qquad b_E:=-\frac{\hat{a}_-}{\hat{a}_-+E},
\end{equation*}
and (\ref{3.2}), we can rewrite formula (\ref{4.28}) as
\begin{equation}\label{2.4.2}
( \cL_{a_-\star} + E )^{-1} = \frac{1}{E}\big(\cI+\cL_{\hat{b}_E\star}\big),\qquad
\hat{b}_E:=\cF[b_E].
\end{equation}

We observe that the function $\hat{b}_E(\xi)$ is strictly positive for all $\xi\in\mathds{R}^d$ and it increases monotonically in $E$. This implies immediately that the operator $\cL_{\hat{b}_E\star}$ is monotonically increasing in $E$ in the sense of quadratic forms.

Substituting (\ref{2.4.2}) into (\ref{2.4.1}), we obtain
\begin{equation*}
\vp=-\frac{V_-}{E}\vp -\frac{1}{E} V_-^{\frac{1}{2}} \cL_{\hat{b}_E\star} V_-^{\frac{1}{2}} \vp.
\end{equation*}
Taking into consideration that $E+V_->0$, we denote $\phi:=(-(E+V_-))^{\frac{1}{2}}\vp$ and we
rewrite the above equation as
\begin{equation*}
\cQ_E \phi=\phi,\qquad \cQ_E:=
\big( - (E + V_-) \big)^{-\frac{1}{2}} V_-^{\frac{1}{2}} \cL_{\hat{b}_E\star} V_-^{\frac{1}{2}} \big( - (E + V_-) \big)^{-\frac{1}{2}} \phi.
\end{equation*}
Hence, if $E<\mu_0$ is an eigenvalue of the operator $\cL^{(-)}$, then $1$ is an eigenvalue of operator $\cQ_E$.

We observe that $\cQ_E$ is an integral operator:
\begin{equation*}
(\cQ_E u)(x)=\int\limits_{\mathds{R}^d} Q_E(x,y) u(y)\,dy,
\end{equation*}
where
\begin{equation}\label{4.30}
Q_E(x,y):=\big(-(E +V_-(x))\big)^{\frac{1}{2}}V_-^{\frac{1}{2}}(x) \hat{b}_E(x-y) V_-^{\frac{1}{2}}(y)\big(-(E + V_-(y))\big)^{\frac{1}{2}}.
\end{equation}

By their definitions, both the functions $V_-$ and $\hat{b}_E$ vanish at infinity. This ensures that the operator $\cL_{\hat{b}_E\star}$ is compact in $L_2(\mathds{R}^d)$ and therefore, the same is true for the operator $\cQ_E$. And since the operator $\cL_{\hat{b}_E\star}$ is monotonically increasing in $E$ in the sense of quadratic forms, we obtain the same property also for $\cQ_E$. These two properties of the operator $\cQ_E$ yield that first,
the spectrum of the operator $\cQ_E$ consists of discrete eigenvalues $\l_m(E)$ and a possible point of continuous spectrum at zero, and second, these eigenvalues $\l_m(E)$ increase as
$E \to \mu_0 -0$. In view of the latter property and the aforementioned relation between the eigenvalues of the operator $\cL^{(-)}$ and of $\cQ_E$, if $E_n^{(-)}$ is an eigenvalue of the operator $\cL^{(-)}$, then $\l_m(E_n^{(-)})=1$ for some $m$ and $\l_m(E)>1$ as $E<E_n^{(-)}$. Hence, in order to count the total number of the eigenvalues $E_n^{(-)}$, it is sufficient to count the total number of the eigenvalues $\l_m(E)$ passing through $1$ as $E$ goes to $\mu_0$ from below.
In view of an obvious inequality
\begin{equation*}
\sum\limits_{m:\, \l_m(E)\geqslant 1} 1\leqslant \sum\limits_{m:\, \l_m(E)\geqslant 1} \l_m(E)
\end{equation*}
by (\ref{4.30}) we then get:
\begin{align*}
N^{(-)} =&\lim\limits_{E\to \mu_0-}\sum\limits_{m:\, \l_m(E)\geqslant 1} 1 \leqslant \lim\limits_{E\to \mu_0-}\sum\limits_{m:\, \l_m(E)\geqslant 1} \l_m(E)=\lim\limits_{E\to \mu_0-}\operatorname{Tr} \cQ_{E}
\\
=& \lim\limits_{E\to \mu_0-} \int\limits_{\mathds{R}^d} Q_E (x,x)\, dx
= \lim\limits_{E\to \mu_0-} \hat{b}_E(0) \int\limits_{\mathds{R}^d} \frac{V_- (x)}{-(E + V_-(x))} \, dx = I_a I_V.
\end{align*}
In view of inequality (\ref{4.27}), this completes the proof.

\section{Infinite discrete spectrum}

In this section we discuss the situations when the operator $\cL$ possesses infinitely many points of the discrete sprectrum, namely, we prove Theorem~\ref{thDS2}.

We first assume that inequalities (\ref{2.10}) are satisfied.
Since $V(x)\equiv V_{\min}$ on $Q_r(x_0)$, in view of (\ref{3.20}) for each
infinitely differentiable function $u$ compactly supported in
$Q_r(x_0)$
we have
\begin{equation*}
\fl[u]-V_{\min}\|u\|_{L_2(\mathds{R}^d)}^2 = (\cL_{a\star}u,u)_{L_2(\mathds{R}^d)} =(\hat{a}\hat{u},\hat{u})_{L_2(\mathds{R}^d)},\qquad \hat{u}:=\frac{1}{(2\pi)^\frac{d}{2}}
\cF[u].
\end{equation*}
Inequalities (\ref{2.10}) imply that $\hat{a}\leqslant 0$ and $\hat{a}$ is a non-trivial function. Therefore, this function is non-zero and negative on a set of positive measure; we denote this set by $\Om$. Since the function $u$ is compactly supported, its Fourier transform $\hat{u}$ is analytic in $\xi$. Hence, it is non-zero on $\Om$ and we get
\begin{equation*}
\fl[u]-V_{\min}\|u\|_{L_2(\mathds{R}^d)}^2 < 0\qquad\text{for each}\quad u\in C_0^\infty(Q_r(x_0)).
\end{equation*}
Since $C_0^\infty(Q_r(x_0))$
 is an infinite dimensional space, by the minimax principle the above inequality implies that the operator $\cL$ possesses infinitely many eigenvalues below the point $\mu_0$.

We proceed to the proving the second part of the theorem. Due to (\ref{4.35}), for an arbitrary function $u\in C_0^\infty(\overline{Q_r(x_0)})$ continued by zero outside $\overline{Q_r(x_0)}$ we have:
\begin{equation}\label{4.2}
\fl[u]-V_{\min}\|u\|_{L_2(\mathds{R}^d)}^2 = (\cL_{a\star}u,u)_{L_2(\mathds{R}^d)}\leqslant \sum\limits_{n\in\mathds{Z}^d} a_n|U_n|^2.
\end{equation}
By our assumptions, all Fourier coefficients satisfy $a_n\leqslant 0$ and there exists an infinite subsequence of these coefficients, for which the latter inequality is strict. We denote such subsequence by $n'$ and then by (\ref{4.2}) we get:
\begin{equation*}
\fl[u]-V_{\min}\|u\|_{L_2(\mathds{R}^d)}^2\leqslant \sum\limits_{n'} a_{n'}|U_{n'}|^2<0
\end{equation*}
for $u$ such that at least one of its coefficients $U_{n'}$ is non-zero. It is clear that the space of such functions is infinite-dimensional and by the minimax principle we conclude on the existence of countably many eigenvalues below $\mu_0$. This completes the proof.

\section*{Acknowledgments}

D.I.B. was partially supported by  the Czech Science Foundation within the project 22-18739S.

\end{document}